\newtheorem{theo}{Theorem}[section]
\newtheorem{defi}[theo]{Definition}
\newtheorem{prop}[theo]{Proposition}
\newtheorem{cor}[theo]{Corollary}
\newtheorem{lem}[theo]{Lemma}
\newtheorem{rem}[theo]{Remark}
\newtheorem{ex}{Example}
\newenvironment{dem}{\noindent
  \textit{{Proof}} : }
  {\hfill\qedsymbol\newline}
\newcommand{\IN}{\mathbb N}
\newcommand{\field}[1]{\mathbb{#1}}
\title{Jet schemes of toric surfaces}
\author{Hussein MOURTADA}
\begin{document}
\maketitle

\begin{abstract}
For $m\in \IN, m\geq 1,$ we determine the irreducible components of the $m-th$ jet scheme of a normal toric surface $S.$ We
give formulas for the number of these components and their dimensions. When $m$ varies, these components give rise to projective systems,
to which we associate a weighted graph. We prove that the data of this graph is equivalent to the data of the analytical type of $S.$
Besides, we classify these irreducible components by an integer invariant that we call index of speciality. 
We prove that for $m$ large enough, the set of components with index of speciality $1,$ is in $1-1$ correspondance with the set of
exceptional divisors that appear on the minimal resolution of $S.$
\end{abstract}

\section{Introduction}

Nash has introduced the arc space of a variety $X$ in order to investigate the intrinsic data of the various resolutions of singularities
of $X.$  The analogy with $p-$adic numbers has led Kontsevich \cite{K}, Denef and Loeser \cite{DL1} to invent motivic integration 
and to introduce several rational series that generalize analogous series in the $p-$adic context \cite{DL2}. The geometric counterpart of the theory
 of motivic integration has been used by Ein, Mustata and others  to obtain formulas controlling discrepancies in terms of invariant of jet schemes 
-these are finite dimensional approximations of the arc space-\cite{Mus2},\cite{ELM},\cite{EM},\cite{dFEI}. Roughly speaking, while we can extract 
informations about abstract resolutions of singularities  from the arc space and vice versa, we can extract 
informations about embedded resolutions of singularities from the jet schemes and vice versa. This partly explains why the arc space
of a toric variety -which has been intensively studied \cite{Mum},\cite{L},\cite{B-GS},\cite{I},\cite{IK}-  is well understood. 
Indeed, we know an equivariant abstract resolution of a toric variety, what permits to undertsand the action of the arc space of the 
torus on its arc space \cite{I}, but an equivariant embedded resolution is less accessible.  \\

Note that despite that jet schemes were the subject of numerous article in the last decade, few is known about their geometry    
for specific class of singularities, except for the following three classes: monomial ideals \cite{GS},
determinantal varieties \cite{D}, plane branches \cite{Mo1}. \\

In this article, we study the jets schemes of normal toric surface singularities. Beside being the simplest toric singularities,  
this class of singularities is interesting from the following points of view:\\

These surfaces are examples of varieties having rational singularities, but which are not necessary locally 
complete intersection, therefore we can not characterize their rationality by \cite{Mus1} via their jet schemes. We will prove that these latter have 
special properties, for example: for a given $m\in\mathbb{N},$ we will prove that the irreducible components of the $m-$th jet scheme 
of a toric surface, which have the same index of speciality (see \ref{sp} for a definition) are equidimensional. It would be interesting to figure 
out if this remains true for all rational singularities. Note that apart from the case of the $A_n$ singularities, these jet schemes are never 
irreducible, as it is the case for rational complete intersection singularities \cite{Mus1}.\\

Despite that these singularities are not complete intersections and therefore we do not have a definition of 
non-degeneration with respect to their Newton polygon in the sense of Kouchnirenko \cite{Ko},  they heuristically are 
 non-degenerate because they are desingularized with one toric morphism, so from  a jet-scheme theoretical point of view, 
they should not have vanishing components \cite{Mo1} (i.e. projective systems of irreducible components whose limit in 
the arc space are included in the arc space of the singular locus); this is what we prove in the proposition \ref{nv} and remark \ref{nnv}.
This is an approach towards defining Newton polygons without coordinates.\\

In \cite{Ni}, Nicaise has computed the Igusa motivic Poincaré series for toric surface singularities and proved that we can not extract 
the analytical type of the surface from this series. We will prove that the data of the number of irreducible components and their dimensions is equivalent to 
the data of the motivic Poincaré series. On the other hand, we will assign to the jet schemes of a toric surface a weighted graph that contains
 informations about how their irreducible components behave under the transition morphisms, and we will prove in corollary \ref{gra} that the data of this
graph is equivalent to the analytical type of the surface.\\

The Nash map for a toric surface $S$ which assigns to every irreducible component of the space of arcs centered in the singular locus an 
exceptional divisor on the minimal resolution of $S$ is bijective \cite{IK}. In general it is a difficult task to relate the irreducible components 
of the jet schemes to the irreducble components of the arc space. For a given $m,$ we classify these irreducible components by an integer invariant that
 we call index of speciality (\ref{sp}). We prove that for $m$ big enough, the components with index of speciality $1,$ are in $1-1$ correspondance with the 
exceptional divisors that appear on the minimal resolution of $S$.  This is to compare with a result that we have 
obtained in \cite{Mo2} for rational double point singularities.\\

We determine the irreducible components of the jet schemes of a toric surface as the closure of certain contact loci, and we give formulas for their number and dimensions. As a byproduct, 
we will deduce using Mustata's formula from \cite{Mus2} the log canonical threshold of the pair $S\subset \mathbb{A}^e,$
where $e$ is the embedding dimension of $S.$\\

Some of the results of this paper were announced in \cite{Mo3}.

The structure of the paper is as follows: in section two  we present a reminder on 
jet schemes and on toric surfaces. In section three we study the jet schemes of the $A_n$ singularities. 
The last section is devoted to the toric surfaces of embedding dimension bigger or equal to four.\\

AKNOWLEDGEMENTS\\

I would like to thank Monique Lejeune-Jalabert whose influence radically improved this article. 
I would like also to thank Pedro Gonzalez-Perez and Nicolas Pouyanne for discussions during the preparation of this work.\\

\section{Jet schemes and toric surfaces}
\subsection{Jet schemes}
Let $\field{K}$ be field.
Let $X$ be a $\field{K}$-scheme of finite type over $\field{K}.$ For $m \in \mathbb{N},$ the functor $F_m :
\field{K}-Schemes \longrightarrow Sets$
which to an affine scheme defined by a $\field{K}-$algebra $A$ associates
\\ $$F_m(Spec(A))=Hom_\field{K}(Spec A[t]/(t^{m+1}),X)$$
is representable by a $
\field{K}-$scheme $X_m$ \cite{V}. We call $X_m$ the $m-$th jet scheme of $X$ and we have that $F_m$ is  isomorphic to its functor of points.
In particular the $\field{K}-$points of $X_m$ are in bijection with the $
\field{K}[t]/(t^{m+1})$ points of $X$.
\\For $m,p \in \mathbb{N}, m > p$, the truncation homomorphism $A[t]/(t^{m+1}) \longrightarrow A[t]/(t^{p+1})$ induces
a canonical projection $\pi_{m,p}: X_m \longrightarrow X_p.$ These morphisms are affine and for $p<m<q$ they clearly verify
 $\pi_{m,p}\circ \pi_{q,m}=\pi_{q,p}.$
This yields an inverse system whose limit $X_{\infty}$ is a scheme called the arc space of $X.$  Note that $X_0=X$. We denote the 
canonical projections $X_m\longrightarrow X_0$ by $\pi_{m}$ and $X_{\infty}\longrightarrow X_m$ by $\Psi_m$. See \cite{EM} for more about jet schemes.\\

\begin{ex}Let $X=Spec ~\frac{\field{K}[x_0, \cdots ,x_n]}{(f_1, \cdots ,f_r)}$ be an affine $\field{K}-$scheme. For a $\field{K}$-algebra $A$, an
$A$-point of $X_m$ is a $\field{K}$-algebra homomorphism
$$\varphi : \frac{
\field{K}[x_0, \cdots ,x_n]}{(f_1, \cdots ,f_r)}\longrightarrow A[t]/(t^{m+1}).$$
This homomorphism  is completely determined by the image of $x_i,i=0, \cdots ,n$
$$x_i \longmapsto \varphi(x_i)=x_i^{(0)}+x_i^{(1)}t+  \cdots  + x_i^{(m)}t^m$$
and it should verify that  $\varphi(f_l)=f_l(\phi(x_0),  \cdots  ,\phi(x_n))\in (t^{m+1})$,  $l=1, \cdots ,r.$
\\
\\ Therefore if we set $$f_l(\phi(x_0),  \cdots  ,\phi(x_n))=\sum_{j=0}^m F_l^{(j)}(\underline{x}^{(0)}, \cdots ,\underline{x}^{(j)})~t^j~~mod ~~ (t^{m+1})$$
where $\underline{x}^{(j)}=(x^{(j)}_0, \cdots ,x^{(j)}_n),$ then we have that
$$X_m=Spec \frac{
\field{K}[\underline{x}^{(0)}, \cdots ,\underline{x}^{(m)}]}{(F_l^{(j)})_{l=1, \cdots ,r}^{ j=0, \cdots ,m}} $$
\end{ex}
\begin{ex}
From the above example, we see that the m-th jet scheme of the affine space $\mathbb{A}^n$ is
isomorphic to $\mathbb{A}^{(m+1)n}$ and that the projection $\pi_{m,{m-1}}:\mathbb{A}^n_m\longrightarrow 
\mathbb{A}^n_{m-1}$
is the map that forgets the last $n$ coordinates.
\end{ex}
\begin{rem}\label{ns}
 Note that in general, if $X$ is a non singular variety of dimension $n,$ then all the projections $\pi_{m,{m-1}}:X_m\longrightarrow X_{m-1}$ 
are locally trivial fibrations with fiber $\mathbb{A}^n.$ In particular $X_m$ is of dimension $n(m+1)$ (\cite{EM}).
\end{rem}

\subsection{Toric surfaces}
Let $S$ be a singular affine normal toric surface defined over the field $\mathbb{K}.$ There exist two coprime integers $p$ and $q$ such that $S$ is defined by the cone 
$\sigma \subset N=\mathbb{Z}^2$ generated by $(1,0)$ and $(p,q)$ and $0<p<q,$ i.e. $S=$Spec$ \mathbb{K}[x^u,~u\in \sigma^\vee \cap M]$ where 
$\sigma^\vee$ is the dual cone of $\sigma$ and $M$ is 
the dual lattice of  $N$ (\cite{O}). We have the Hirzebruch-Jung continued fraction expansion in terms
of $c_j\geq 2:$

$$\frac{q}{p}= c_2 - \cfrac{1}{c_3 -
                          \cfrac{1}{\cdots - \cfrac{1}{c_{e-1}}}}$$
which we denote by $[c_2,...,c_{e-1}].$ Let $\theta^\vee$ be the convex hull of $(\sigma^\vee \cap M) \setminus 0$ and let 
$\partial\theta^\vee$ be its boundary polygon. Let $u_1,u_2,\ldots,u_h$ be the points of $M$ lying in this order  on $\partial\theta^\vee,$
with $u_1=(0,1)$ and $u_h=(q,-p).$
Then from \cite{O}, proposition $1.21$ we have that $h=e$ is the embedding dimension of $S$ and the $u_i$ form a minimal system
 of generators of the semigroup 
$\sigma^\vee \cap M.$ For $i=1,\ldots, e,$ we will denote by $x_i$ the regular function on $S$ defined by $x^{u_i}.$ Riemenschneider has exhibited 
the generators of the ideal  defining $S$ in 
$\mathbb{A}^e=\textnormal{Spec}\field{K}[x_1,\cdots,x_e].$ 
They can be given in a quasi-determinantal format \cite{R}, \cite{St}:
\[
\left(
\begin{array}{ccccccc}
x_1 &              & x_2 & \dots &     x_{e-2} & &  x_{e-1} \\
\multicolumn{1}{c}{} &  x_2^{c_2-2} & \multicolumn{3}{c}{\dots}    &
x_{e-1}^{c_{e-1}-2} & \multicolumn{1}{c}{} \\
x_2 &              & x_3 & \dots &     x_{e-1}   &   &   x_{e} 
\end{array}
\right)
\]
where  the generalised minors of a quasi-determinant
\[\left(
\begin{array}{ccccccc}
f_1 &              & f_2 & \dots &    f_{k-1} & &  f_{k} \\
\multicolumn{1}{c}{} &  h_{1,2} & \multicolumn{3}{c}{\dots}    &
h_{k-1,k} & \multicolumn{1}{c}{} \\
g_1 &              & g_2 & \dots &    g_{k-1}   &    &   g_{k} 
\end{array}\right)
\]
are $f_ig_j - g_i(\prod_{n=i}^{j-1} h_{n,n+1})f_j$. \\

They can be written as follows:
$$E_{ij}=x_ix_j-x_{i+1}x_{i+1}^{c_{i+1}-2} x_{i+2}^{c_{i+2}-2}\cdots x_{j-2}^{c_{j-2}-2}x_{j-1}^{c_{j-1}-2}x_{j-1},$$ 
where ~~$1\leq i <j-1\leq e-1.$\\

Let $b_i \in \mathbb{N},b_i\geq 2,$ be such that $q/(q-p)=[b_1,\ldots,b_r].$
Let $l_0=(1,0),\ldots,l_{s+1}=(p,q)$ in this order be the elements of $N$ lying on the compact edges of the boundary  $\partial\theta$
of the convex hull $\theta$ of $(\sigma \cap N) \setminus 0.$ 
\begin{prop}\label{oda}
 We have that $r=s$ and is equal to the number of irreducible components of the exceptional curve for the minimal resolution of singularities 
of $S.$ Moreover we have that 
$$c_2+\cdots+c_{e-1}-2(e-2)+1=s.$$ 
\end{prop}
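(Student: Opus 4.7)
The statement splits into two independent assertions: the geometric identification $r=s$ (with the common value being the number of components of the exceptional divisor), and the numerical identity $c_2+\cdots+c_{e-1}-2(e-2)+1=s$. For the geometric part, my plan is to invoke the classical Hirzebruch--Jung description of the minimal resolution $\widetilde{S}\to S$: it is the toric morphism associated with the refinement of $\sigma$ obtained by inserting the rays $\mathbb{R}_{\geq 0}l_j$ for $j=1,\ldots,s$. Each newly inserted ray contributes one irreducible component $E_j$ of the exceptional divisor, so the latter has exactly $s$ components. The primitivity of $l_j$ together with the smoothness of each adjacent two–dimensional subcone forces $l_{j-1}+l_{j+1}=b_j\,l_j$ for some integer $b_j\geq 2$, and the standard toric intersection calculation identifies this $b_j$ with $-E_j^2$. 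A direct induction on $s$ then identifies $(b_1,\ldots,b_s)$ with the Hirzebruch--Jung expansion of $q/(q-p)$, which yields $r=s$; I would cite \cite{O} for this.

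For the numerical identity, my plan is to use the Riemenschneider ``dot diagram'' that encodes both continued fractions simultaneously. I would arrange $e-2$ rows, indexed by $i=2,\ldots,e-1$, with row $i$ containing $c_i-1$ dots, and shift the rows so that the first dot of row $i+1$ lies immediately below the last dot of row $i$. Riemenschneider's duality then states that the resulting staircase has exactly $r$ columns and that the $j$-th column carries $b_j-1$ dots. A one–line induction on $i$, using the shifting rule, shows that the last dot of row $i$ occupies column $\sum_{k=2}^{i}c_k-2(i-2)-1$; specialising to $i=e-1$ and invoking $s=r$ yields exactly the claimed identity $s=c_2+\cdots+c_{e-1}-2(e-2)+1$.

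The main obstacle is the justification of Riemenschneider's duality itself, i.e.\ that the column lengths of the dot diagram really reproduce the Hirzebruch--Jung expansion of $q/(q-p)$. My preferred route is to cite it; a self--contained alternative is induction on $e$. The base case $e=3$ is an $A_n$–singularity, for which $q/p=[n+1]$ is dual to $q/(q-p)=[\underbrace{2,\ldots,2}_{n}]$ and the identity becomes $n+1-2+1=n=s$; the inductive step amounts to peeling off the top row of the dot diagram, which on both continued fractions corresponds to running one step of the Euclidean algorithm, reducing to a surface with strictly smaller embedding dimension.
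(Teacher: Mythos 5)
Your argument is correct and coincides with the one the paper delegates to the literature: the paper's entire proof is the citation of Lemma 1.22 and Corollary 1.23 of \cite{O}, which contain exactly the two ingredients you use, namely the toric description of the minimal resolution via the inserted rays through $l_1,\ldots,l_s$ and the Riemenschneider/Hirzebruch--Jung duality between the expansions of $q/p$ and $q/(q-p)$. Your dot-diagram bookkeeping for the column count, giving $c_2+\cdots+c_{e-1}-2(e-2)+1=r=s$, checks out.
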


See lemma 1.22 and corollary 1.23 in  \cite{O} for a proof.

\section{Jet schemes of the $A_n$ singularities}

Let $S$ be the variety defined in $\mathbb{A}^3$ by the equation $f(x,y,z)=xy-z^{n+1}=0.$ $X$ has an $A_n$ singularity at the origin
 $0$ and is nonsingular elsewhere. Note that an affine toric suface of embedding dimension $3$ has this type of singularities
(see section 2.1). If we set 
$$f(\sum_{i=0}^{m}x^{(i)}t^i,\sum_{i=0}^{m}y^{(i)}t^i,\sum_{i=0}^{m}z^{(i)}t^i)=\sum_{i=0}^{i=m}F^{(i)}t^i~~~ \mbox{mod}~~t^{m+1},~~~~(\diamond)$$
then $S_m$ is defined in $\mathbb{A}^{3(m+1)}=\mathbb{A}^{3}_m$ by the ideal
$I_m=(F^{(0)},F^{(1)},...,F^{(m)}).$

By the remark \ref{ns} the morphism  $\pi_m^{-1}(S \backslash 0)\longrightarrow S \backslash 0$ is a trivial fibration,
therefore we have that $\overline{\pi_m^{-1}(S\backslash0)}$
is an irreducible component of $S_m$ of codimension $m+1$ in $\mathbb{A}^{3}_m.$ On the other hand, we will prove in the coming lines that 
the codimension of $S_m^0:=\pi_m^{-1}(0)$  in  $\mathbb{A}^{3}_m$ is $m+2,$ which means that $S_m$ is irreducible 
for every $m \in \mathbb{N}~:$ indeed, since $I_m$ is generated by $m+1$ equations, any irreducible component of $S_m$ could  have codimension
at most $m+1.$ (Note that this fact -that $S_m$ is irreducible- follows form \cite{Mus1} because $S$ is  locally a complete intersection 
with a rational singularity, but we give here a direct proof in this simple case.)\\
We claim that for $m \leq n,$ we have $S_m^0=Z_m^0$, where $Z \subset \mathbb{A}^3$ is the hypersurface defined by $xy=0.$
Indeed, a $m-$jet $\gamma_m=(x=\sum_{i=0}^{m}x^{(i)}t^i,y=\sum_{i=0}^{m}y^{(i)}t^i,z=\sum_{i=0}^{m}z^{(i)}t^i) \in (\mathbb{A}^{3})_m$ 
centered at the origin (i.e.$x^{(0)}=y^{(0)}=z^{(0)}$) is in $S_m^0$ if and only if $xy-z^{n+1} \equiv 0 ~\mbox{mod}~ t^{m+1}$, but since $z_0=0$ 
and $m\leq n,$ but we have that $ord_tz^{n+1}\geq n+1>m+1$, therefore this is equivalent to $ord_txy \geq n+1$ and therefore to $\gamma \in Z_m^0$.\\
But clearly for $m\leq  n$, the irreducible commponents of $Z_m^0=S_m^0$ are the subvarities defined by the ideals
$$I^l_m=(x^{(0)},...,x^{(l-1)},y^{(0)},...,y^{(m-l)},z^{(0)}),l=1,...,m.$$
Notice the the codimensions of $V(I^l_m)$ in $\mathbb{A}_m^3$ is equal to $m+2$ for $l=1,...,m$. We deduce that for $m\leq n, S_m$ is
 irreducible of codimension $m+1$. On the other hand, for $m\geq n+1$ we have that $\pi^{-1}_{m,n}(V(I^l_n))$ is 
defined in $(\mathbb{A}^{3})_m$ by the ideal $I_m^l=(I_n^l,J_{m-(n+1)}^l)$ where $J_{m-(n+1)}^l$ is the ideal obtained from the ideal  
 defining $X_{m-(n+1)}$ in $\mathbb{A}^{3})_{m-(n+1)}$ by changing variables. Indeed if we set 
$$f(\sum_{i=l}^{m}x^{(i)}t^i,\sum_{i=n-l+1}^{m}y^{(i)}t^i,\sum_{i=1}^{m}z^{(i)}t^i)=$$

$$f(t^l(\sum_{i=0}^{m-l}x^{(l+i)}t^i),t^{n-l+1}(\sum_{i=0}^{m-(n-l+1)}y^{(n-l+1+i)}t^i),t(\sum_{i=0}^{m-1}z^{(i+1)}t^i))
=$$
$$t^{n+1}f(\sum_{i=0}^{m-l}x^{(l+i)}t^i,\sum_{i=0}^{m-(n-l+1)}y^{(n-l+1+i)}t^i,\sum_{i=0}^{m-1}z^{(i+1)}t^i)$$

$$=t^{n+1}(\sum_{i=0}^{i=m-(n+1)}G_l^{(i)}t^i)~~~ \mbox{mod}~~t^{m+1},~~~~~~~~~~~~~~~~(\diamond\diamond)$$
then $J_{m-(n+1)}^l$ is generated by $G_l^{(i)},i=0,\ldots,m-(n+1),$ and  by comparing $(\diamond)$ with $(\diamond \diamond),$
we get that $$G_l^{(i)}=F^{(i)}(x^{(l)},\ldots,x^{(m)},y^{(n-l+1)},\ldots,y^{(m)},z^{(1)},\ldots,z^{(m)}).$$
\\
We deduce that for $l=1,...,n,$ $$Codim~~(\pi^{-1}_{m,n}(V(I^l_n)),\mathbb{A}_m^{3})=n+2 +Codim~~(S_{m-(n+1)},\mathbb{A}^{3}_{m-(n+1)}) . $$ 
This implies by a simple induction that for $l=1,...,n,$
$$Codim~~\pi^{-1}_{m,n}(V(I^l_n))=m+2.$$
Therefore  $Codim~~(S_m^0,\mathbb{A}^3_m)=m+2,$ so $S_m$ is irreducible. It follows that $\pi^{-1}_{m,n}(V(I^l_n))$ which is isomorphic to
 $S_{m-(n+1)}\times \mathbb{A}^{2n+1}$
 is irreducible and we conclude:

\begin{theo}\label{lc}
For $m \in \mathbb{N},n\geq 1,$  The scheme of $m-$th jets centered in the singular locus of an $A_n$ 
singularity is a locally complete intersection scheme.
For $m\leq n$ this scheme has $m$ irreducible components of codimension $m+2$. For $m \geq n+1,$ it has $n$ irreducible components  
each of codimension $m+2.$ 
\end{theo}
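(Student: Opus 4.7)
The plan is to formalize the dichotomy between $m\leq n$ and $m\geq n+1$ sketched in the paragraphs preceding the statement, and then iterate. First I would write $S_m^0$ inside $\mathbb{A}^3_m$ as the vanishing locus of $F^{(0)},\ldots,F^{(m)}$ together with the linear origin conditions $x^{(0)}=y^{(0)}=z^{(0)}=0$. Since $z^{(0)}=0$ forces $z^{n+1}$ to have $t$-order at least $n+1$, the equations $F^{(i)}$ for $i\leq n$ do not involve the $z^{n+1}$ contribution at all.

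In the base range $m\leq n$, this identifies $S_m^0$ with $Z_m^0$, the jet scheme at the origin of the reducible hypersurface $xy=0$. A direct analysis exhibits the $m$ components $V(I_m^l)$ for $l=1,\ldots,m$, each a linear subspace of $\mathbb{A}^3_m$ isomorphic to $\mathbb{A}^{2m+1}$ and cut out by exactly $m+2$ linear equations, hence a trivial complete intersection of the predicted codimension $m+2$. Comparing $m+2$ with the codimension $m+1$ of the generic component $\overline{\pi_m^{-1}(S\setminus 0)}$, no $V(I_m^l)$ can be a component of $S_m$, so $S_m$ itself must be irreducible.

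For $m\geq n+1$ I would carry out the substitution $x=t^l\tilde{x}$, $y=t^{n-l+1}\tilde{y}$, $z=t\tilde{z}$ on the preimage $\pi^{-1}_{m,n}(V(I_n^l))$, following the computation indicated in $(\diamond\diamond)$, so that $xy-z^{n+1}$ factors as $t^{n+1}(\tilde{x}\tilde{y}-\tilde{z}^{n+1})$. The residual equations $G_l^{(i)}$ in the new coordinates $\tilde{x}^{(0)},\ldots,\tilde{x}^{(m-n-1)}$, $\tilde{y}^{(0)},\ldots,\tilde{y}^{(m-n-1)}$, $\tilde{z}^{(0)},\ldots,\tilde{z}^{(m-n-1)}$ reproduce the defining equations of $S_{m-(n+1)}$, while the remaining $2n+1$ coordinates are unconstrained, giving the isomorphism $\pi^{-1}_{m,n}(V(I_n^l))\cong S_{m-(n+1)}\times\mathbb{A}^{2n+1}$. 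By induction on $m$, $S_{m-(n+1)}$ is irreducible of codimension $m-n$ in $\mathbb{A}^3_{m-(n+1)}$ and a complete intersection there, so each $\pi^{-1}_{m,n}(V(I_n^l))$ is irreducible of codimension $(n+2)+(m-n)=m+2$ in $\mathbb{A}^3_m$, cut out by $n+2$ linear equations plus $m-n$ pulled-back equations, exactly matching the codimension. Using $S_m^0=\pi^{-1}_{m,n}(S_n^0)$ and the base case count of $n$ components for $S_n^0$, one gets $n$ components for $S_m^0$, and $S_m$ is again irreducible by the same codimension comparison.

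The main obstacle I anticipate is the careful bookkeeping in the substitution step: one has to verify explicitly that the reindexing $x^{(l+i)}\mapsto\tilde{x}^{(i)}$, $y^{(n-l+1+i)}\mapsto\tilde{y}^{(i)}$, $z^{(1+i)}\mapsto\tilde{z}^{(i)}$ restricts the equations $F^{(j)}$ with $j>n$ to exactly the $G_l^{(i)}$ defining $S_{m-(n+1)}$, and to track which coordinates remain unconstrained so as to identify the $\mathbb{A}^{2n+1}$ factor. Once this is checked, the induction runs smoothly, and the locally complete intersection property reduces to the observation that each irreducible component is defined in $\mathbb{A}^3_m$ by a number of equations equal to its codimension.
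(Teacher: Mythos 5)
Your proposal is correct and takes essentially the same route as the paper: the identification $S_m^0=Z_m^0$ for $m\le n$ with components $V(I_m^l)$, and for $m\ge n+1$ the substitution of $(\diamond\diamond)$ giving $\pi_{m,n}^{-1}(V(I_n^l))\simeq S_{m-(n+1)}\times\mathbb{A}^{2n+1}$, followed by induction on $m$. The only point to tighten is that excluding the codimension-$(m+2)$ sets from the components of $S_m$ rests on Krull's height bound (the ideal $I_m$ has $m+1$ generators, so every component of $S_m$ has codimension at most $m+1$), not merely on comparison with the codimension of the generic component $\overline{\pi_m^{-1}(S\setminus 0)}$.
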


\section[Jet schemes of toric surfaces]{Jet schemes of toric surfaces of embedding dimension $e \geq 4$}
We keep the notations introduced in section 2 and we begin by introducing some more notations.
Let $f\in \field{K}[x_1,\ldots,x_e]$ ; for $m,p \in \mathbb{N}$ such that  $p\leq m,$ we set:
$$Cont^p(f)_{m}(resp.Cont^{>p}(f)_{m}):=\{\gamma \in S_m \mid ord_{\gamma }(f)=p(resp.>p)\},$$
$$Cont^p(f)=\{\gamma \in S_{\infty}\mid ord_{\gamma }(f)=p\},$$
where $ord_{\gamma }(f)$ is the $t-$order of $f\circ\gamma.$ 
\\For $a,b\in \mathbb{N},~b\not=0,$ we denote by $\lceil\frac{a}{b}\rceil$  the round-up of $\frac{a}{b}$. For $i=2,\cdots, e-1,$
$ s \in \{1,\ldots,\lceil\frac{m}{2}\rceil\}$(i.e. $m\geq 2s-1\geq1)$ and
$ l \in \{s,\ldots,m_i^s\}, $ where $$m_i^s:=min\{(c_i-1)s, (m+1)-s \},~$$
we set  $$D_{i,m}^{s,l}:=Cont^s(x_i)_{m} \cap Cont^{l}(x_{i+1})_{m},$$
and $$C_{i,m}^{s,l}:=\overline{D_{i,m}^{s,l}}.$$

If $R$ is a ring, $I\subseteq R$ an ideal and $f \in R$, we denote by $V(I)$ the subvariety of $Spec~R$ defined by $I$
and by $D(f)$ the open set $D(f):=\textnormal{Spec}~R_f.$
\begin{lem} \label{eff}
 
For $i=2,\cdots,e-1,$ $s\geq 1,$    the ideal defining $C_{i,2s-1}^{s,s}$ in $\mathbb{A}^e_{2s-1}$ is 
$$I_{i,2s-1}^{s,s }=(x_j^{(b)},1 \leq j \leq e,0 \leq b < s).$$
Note that $C_{i,2s-1}^{s,s}$ does not depend on $i$. For $j=1,e,$ we set $$C_{j,2s-1}^{s,s}:=C_{i,2s-1}^{s,s},~i=2,\cdots,e-1.$$  
\end{lem}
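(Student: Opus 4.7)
Write $V := V(I_{i,2s-1}^{s,s})$, the affine subspace of $\mathbb{A}^e_{2s-1}$ cut out by the coordinates $x_j^{(b)}$ with $1\le j\le e$ and $0\le b<s$. Because this ideal does not involve $i$, once the asserted equality $C_{i,2s-1}^{s,s}=V$ is proved, the independence on $i$ is automatic. I establish the equality in three moves: (i) $V\subseteq S_{2s-1}$; (ii) $D_{i,2s-1}^{s,s}\subseteq V$; (iii) $D_{i,2s-1}^{s,s}$ is the principal open subset of $V$ defined by $x_i^{(s)}\neq 0$ and $x_{i+1}^{(s)}\neq 0$. Since $V$ is an affine space (hence irreducible) and this open subset is nonempty, its Zariski closure is all of $V$, yielding $C_{i,2s-1}^{s,s}=V$. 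Item (i) is immediate: on $V$ every $x_j(\gamma)$ lies in $(t^s)$, so each Riemenschneider equation $E_{k,l}(\gamma)=x_k(\gamma)x_l(\gamma)-M_{k,l}(\gamma)$ has $t$-order $\ge 2s$ and vanishes modulo $t^{2s}$. Item (iii) is bookkeeping once (ii) is known: membership in $V$ forces $p_j := ord_t(x_j(\gamma))\ge s$, and the extra conditions $p_i=p_{i+1}=s$ are precisely $x_i^{(s)}\neq 0$ and $x_{i+1}^{(s)}\neq 0$.

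The substance of the proof is item (ii). Given $\gamma\in D_{i,2s-1}^{s,s}$ with $p_i=p_{i+1}=s$, I want $p_j\ge s$ for every $j$. The combinatorial engine is the following fact about the monomial
$$M_{k,l}=x_{k+1}^{c_{k+1}-1}x_{k+2}^{c_{k+2}-2}\cdots x_{l-2}^{c_{l-2}-2}x_{l-1}^{c_{l-1}-1}$$
inside $E_{k,l}=x_k x_l - M_{k,l}$ (for $l\ge k+2$): its total degree equals $\sum_{j=k+1}^{l-1}c_j-2(l-k-2)$, which is $\ge 2$ since each $c_j\ge 2$. Consequently, whenever $p_{k+1},\ldots,p_{l-1}\ge s$, one has $ord_t\,M_{k,l}(\gamma)\ge 2s$.

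Armed with this, I propagate the bound outward from $\{i,i+1\}$. The base cases use $E_{i-1,i+1}=x_{i-1}x_{i+1}-x_i^{c_i}$ and $E_{i,i+2}=x_i x_{i+2}-x_{i+1}^{c_{i+1}}$: if for example $p_{i-1}<s$, then the leading coefficient $x_{i-1}^{(p_{i-1})}x_{i+1}^{(s)}$ of $x_{i-1}x_{i+1}$ sits in degree $p_{i-1}+s<2s$ and cannot be canceled by $x_i^{c_i}$, whose $t$-order $c_i s$ is at least $2s$; since $x_{i+1}^{(s)}\neq 0$, this forces $x_{i-1}^{(p_{i-1})}=0$, contradicting the definition of $p_{i-1}$. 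Hence $p_{i-1}\ge s$, and symmetrically $p_{i+2}\ge s$. The inductive step for $k<i-1$ (resp.\ $k>i+2$) applies $E_{k,i}$ (resp.\ $E_{i+1,k}$) in exactly the same way, the combinatorial fact above guaranteeing that the monomial part has $t$-order $\ge 2s$ once the intermediate $p_j$'s are known to be $\ge s$. The main obstacle of the argument is the telescoping total-degree estimate for $M_{k,l}$; once that is in place, the propagation and the open-density argument are routine.
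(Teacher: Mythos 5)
Your proof is correct and follows essentially the same route as the paper: both establish $D_{i,2s-1}^{s,s}=V(I_{i,2s-1}^{s,s})\cap D(x_i^{(s)}x_{i+1}^{(s)})$ by propagating the order bound $ord_\gamma x_j\geq s$ outward from $\{i,i+1\}$ via the equations $E_{kl}$ (using that each monomial part has total degree at least $2$), and then conclude by irreducibility of the coordinate subspace $V(I_{i,2s-1}^{s,s})$. The only cosmetic difference is that you anchor the upward induction on $x_{i+1}$ via $E_{i+1,k}$ where the paper uses $E_{i,k}$; both work.
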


\begin{dem}
Let us prove that  $D_{i,2s-1}^{s,s}=V(I_{i,2s-1}^{s,s})\cap D(x_i^{(s)}x_{i+1}^{(s)}).$
Let $\gamma \in \mathbb{A}^e_{2s-1}$ such that $ord_{\gamma}x_i=ord_{\gamma}x_{i+1}=s.$
So, we have $ord_{\gamma}x_i^{c_i}=c_is> 2s-1$ because $c_i\geq2.$ If moreover $\gamma$ lies in $S_{2s-1}$, then it satisfies $E_{i-1,i+1}$ mod $t^{2s},$ which is equivalent to $ord_{\gamma}x_{i-1} \geq s $, because $x_i^{c_i}\circ \gamma \equiv0$ mod $t^{2s}$
and $ord_{\gamma}x_{i+1}=s$. 
The same argument, using $E_{i-2,i},E_{i,i+2}$ and so on by induction, using the other $E_{ji}$'s and $E_{ij}$'s, gives that $ord_{\gamma}x_{j}\geq s.$
We deduce $$D_{i,2s-1}^{s,s}\subset V(I_{i,2s -1}^{s,s})\cap D(x_i^{(s)}x_{i+1}^{(s)}).$$
The opposite inclusion comes from the fact that a jet in $V(I_{i,2s-1}^{s,s})\cap D(x_i^{(s)}x_{i+1}^{(s)})
\subset \mathbb{A}^e_{2s}$
satisfies all the equations of $S$ modulo $t^{2s}.$ Since $V(I_{i,2s-1}^{s,s})\subset \mathbb{A}^e_{2s-1}$  is irreducible, the lemma follows.
\end{dem}
\begin{lem}\label{inc}
For $i=2,\cdots,e-1,~m \in \mathbb{N},~  s \in \{1,\ldots,\lceil\frac{m}{2}\rceil\}$ and $l \in \{s,\ldots,m_i^s \}, $ we have that
$$C_{i,m}^{s,l}\subset \pi_{m,2s-1}^{-1}(C^{s,s}_{i,2s-1}).$$
\end{lem}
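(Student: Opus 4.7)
The first move is a reduction. By Lemma~\ref{eff}, the scheme $C_{i,2s-1}^{s,s}$ is cut out in $\mathbb{A}^e_{2s-1}$ by the linear equations $x_j^{(b)}=0$ for $1\le j\le e$ and $0\le b<s$, so $\pi_{m,2s-1}^{-1}(C_{i,2s-1}^{s,s})$ is the closed locus in $S_m$ defined by the same equations, i.e.\ the set of jets $\gamma\in S_m$ with $ord_\gamma(x_j)\ge s$ for every $j$. Since this is a closed condition, it is enough to check it on the dense subset $D_{i,m}^{s,l}\subset C_{i,m}^{s,l}$. Thus the lemma reduces to the following claim: for any $\gamma\in D_{i,m}^{s,l}$, one has $ord_\gamma(x_j)\ge s$ for all $j=1,\dots,e$.

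Fix such a $\gamma$: $ord_\gamma(x_i)=s$ exactly and $ord_\gamma(x_{i+1})=l$ with $s\le l\le m_i^s=\min\{(c_i-1)s,\,m+1-s\}$. The plan is to propagate the lower bound $\ge s$ outward from $x_i$ using the Riemenschneider binomials $E_{ab}=x_ax_b-x_{a+1}x_{b-1}\prod_{n=a+1}^{b-1}x_n^{c_n-2}$, each of which is $\equiv 0\pmod{t^{m+1}}$ on $\gamma$. The base step on the lower side uses $E_{i-1,i+1}:\ x_{i-1}x_{i+1}\equiv x_i^{c_i}\pmod{t^{m+1}}$, which yields
$$ord_\gamma(x_{i-1})+l\ =\ ord_\gamma(x_{i-1}x_{i+1})\ \ge\ \min(c_is,\,m+1);$$
the two halves of $l\le m_i^s$ translate precisely into $c_is\ge l+s$ and $m+1\ge l+s$, so $ord_\gamma(x_{i-1})\ge s$. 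The upper base step uses $E_{i,i+2}:\ x_ix_{i+2}\equiv x_{i+1}^{c_{i+1}}\pmod{t^{m+1}}$; combined with $c_{i+1}l\ge 2s$ (since $c_{i+1}\ge 2$, $l\ge s$) and $m+1\ge 2s$ (since $m\ge 2s-1$), this forces $ord_\gamma(x_{i+2})\ge s$. An induction on the distance $d\ge 1$ from $i$ then completes the argument: granted the bound $\ge s$ on $x_i,\dots,x_{i+d}$, every factor on the right-hand side of $E_{i,i+d+1}$ has order $\ge s$ and at least two such factors appear, so the right-hand side has order $\ge 2s$; using $m+1\ge 2s$, this gives $ord_\gamma(x_ix_{i+d+1})\ge 2s$ and hence $ord_\gamma(x_{i+d+1})\ge s$. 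The symmetric step with $E_{i-d-1,i}$ handles $x_{i-d-1}$, and the induction terminates naturally at $j=1$ and $j=e$.

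The delicate point is the lower base step: only the combined two-sided bound $l\le m_i^s$ guarantees $\min(c_is,m+1)\ge l+s$, which is exactly the inequality needed to obtain $ord_\gamma(x_{i-1})\ge s$. Everything else is routine, since each subsequent right-hand side of $E_{ab}$ in the induction has order $\ge 2s$ automatically, and $m+1\ge 2s$ prevents any modular truncation from destroying the estimate. Passing to the closure $C_{i,m}^{s,l}=\overline{D_{i,m}^{s,l}}$ inside the closed locus defined by $ord_\gamma(x_j)\ge s$ for all $j$ yields the claimed inclusion.
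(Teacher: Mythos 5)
Your argument is correct and is essentially the paper's own proof: both establish $ord_\gamma(x_{i-1})\ge s$ from $E_{i-1,i+1}$ using the two-sided bound $l\le m_i^s=\min\{(c_i-1)s,\,m+1-s\}$, then propagate $ord_\gamma(x_j)\ge s$ outward by ascending/descending induction via the relations $E_{ij}$ and $E_{ji}$ together with $m+1\ge 2s$, and finally pass to the closure using that $\pi_{m,2s-1}^{-1}(C_{i,2s-1}^{s,s})$ is closed (by Lemma~\ref{eff}).
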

\begin{dem}
For $\gamma \in D_{i,m}^{s,l},$ we have that 
$$ E_{i-1,i+1}\circ \gamma=x_{i-1}\circ \gamma x_{i+1}\circ \gamma -(x_{i}\circ \gamma)^{c_i} \in (t)^{m+1}. $$
If $c_is\geq m+1,$ then $ord_{\gamma}x_{i-1}\geq m+1-l\geq s,$ and if $c_is<m+1$ then $ord_{\gamma}x_{i-1}=c_is-l\geq s.$ Moreover, since for $i<j-1\leq e-1(resp.~ 1\leq j<i-1),$ we have 
$$E_{ij}\circ \gamma =x_i \circ \gamma x_j \circ \gamma -x_{i+1}\circ \gamma x_{i+1}^{c_{i+1}-2}\circ \gamma \cdots x_{j-1}^{c_{j-1}-2}\circ \gamma x_{j-1}\circ \gamma \in (t)^{m+1},$$
$$(resp.~ E_{ji}\circ \gamma =x_j\circ \gamma x_i \circ \gamma -x_{j+1}\circ \gamma x_{j+1}^{c_{j+1}-2}\circ \gamma \cdots x_{i-1}^{c_{i-1}-2}\circ \gamma x_{i-1}\circ \gamma \in (t)^{m+1}, )$$  
$$ord_{\gamma}x_{i}=s,ord_{\gamma}x_{i+1}\geq s(resp.~ord_{\gamma}x_{i-1}\geq s),  $$
$$ c_{i+1}, (resp.~c_{i-1})\geq 2~~ \textnormal{and}~~ m+1\geq 2s.$$
We get by ascending (resp. descending) induction on $j$ that $ord_{\gamma}x_{j} \geq s,$ and therefore 
$D_{i,m}^{s,l}\subset \pi_{m,2s-1}^{-1}(C^{s,s}_{i,2s-1}).$ The lemma follows since $\pi_{m,2s-1}^{-1}(C^{s,s}_{i,2s-1})$ is closed.

\end{dem}

 \begin{lem}\label{syz} For $m\geq 2s-1\geq 1,~i=1,\ldots, e-1,$
  $$\pi_{m,2s-1}^{-1}(C_{i,2s-1}^{s,s}\cap D(x_{i}^{(s)}))=\{\gamma \in \mathbb{A}^e_m~;~ord_{\gamma}x_{j}\geq s,~ j=1,\cdots,e,~ord_{\gamma}x_{i}=s,$$
$$~ord_{\gamma}E_{i-1,i+1}\geq m+1,~ ord_{\gamma}E_{j,i}(resp.ord_{\gamma}E_{i,j})\geq m+1,~\textnormal{for}~ 1\leq j <i-1$$ $$(resp.~i<j-1\leq e-1)  \}.$$
 \end{lem}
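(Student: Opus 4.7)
The inclusion $\pi_{m,2s-1}^{-1}(C_{i,2s-1}^{s,s}\cap D(x_i^{(s)}))\subseteq\{\cdots\}$ is immediate. Any $\gamma$ on the left lies by construction in $S_m$, so every defining binomial $E_{\alpha\beta}$ of $S$ satisfies $ord_\gamma E_{\alpha\beta}\geq m+1$, including the listed ones; Lemma \ref{eff} identifies $C_{i,2s-1}^{s,s}$ as the vanishing scheme of $\{x_j^{(b)}:1\leq j\leq e,\ 0\leq b<s\}$, so the truncation condition forces $ord_\gamma x_j\geq s$ for every $j$, and the open condition $D(x_i^{(s)})$ sharpens this to $ord_\gamma x_i=s$.

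The substance of the lemma is the reverse inclusion. Given $\gamma$ on the right-hand side, the order conditions on the $x_j$'s already place its $(2s-1)$-truncation inside $C_{i,2s-1}^{s,s}\cap D(x_i^{(s)})$, so what remains is to prove $\gamma\in S_m$, i.e.\ $ord_\gamma E_{jk}\geq m+1$ for every unlisted pair (those with $j,k\neq i$ and $(j,k)\neq(i-1,i+1)$). The strategy is to exhibit, for each such pair, a polynomial identity
$$x_i^{a}\cdot E_{jk}\;=\;\sum_{\ell}M_\ell\cdot E_{\alpha_\ell,\beta_\ell}\qquad(a\in\{1,2\})$$
in $\mathbb{K}[x_1,\ldots,x_e]$, with each $E_{\alpha_\ell,\beta_\ell}$ a listed equation and each $M_\ell$ a monomial of total $x$-degree at least $a$. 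Lifting $\gamma$ to an arc $\tilde\gamma\in\mathbb{A}^e_\infty$ (e.g.\ by padding with zeros) and evaluating in $\mathbb{K}[[t]]$, each $E_{\alpha_\ell,\beta_\ell}(\tilde\gamma)$ has $t$-adic order at least $m+1$, each $M_\ell(\tilde\gamma)$ has order at least $as$, and $x_i(\tilde\gamma)$ has order \emph{exactly} $s$ thanks to $x_i^{(s)}\neq 0$; both sides then have order $\geq as+(m+1)$, and cancelling $x_i^a$ in $\mathbb{K}[[t]]$ gives $ord_{\tilde\gamma}E_{jk}\geq m+1$, which descends modulo $t^{m+1}$ to the required bound for $\gamma$.

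The syzygies themselves are produced by expanding the triple product $x_ix_jx_k$ via two distinct listed relations and comparing. For case A ($j<k<i$), the elementary monomial identity $x_kM_{j,i}=N\cdot M_{k-1,i}$ with $N:=M_{j,k}/x_{k-1}$, verified directly from the formula for $M_{\cdot,\cdot}$, combines with $x_jx_i=M_{j,i}+E_{j,i}$ and $x_{k-1}x_i=M_{k-1,i}+E_{k-1,i}$ to give the uniform syzygy
$$x_iE_{jk}\;=\;x_kE_{j,i}\;-\;N\cdot E_{k-1,i},$$
handling both $k\leq i-2$ and the subcase $k=i-1$ (where $E_{k-1,i}=E_{i-2,i}$ is listed) in one stroke. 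Case B ($i<j<k$) is symmetric. Case C ($j<i<k$ with $(j,k)\neq(i-1,i+1)$) requires $a=2$ in its generic sub-configuration $j\leq i-2,\,k\geq i+2$: expanding $(M_{j,i}+E_{j,i})(M_{i,k}+E_{i,k})=x_i^2x_jx_k$ and absorbing the product $M_{j,i}M_{i,k}=P\cdot x_{i-1}x_{i+1}=P(x_i^{c_i}+E_{i-1,i+1})$ with $P:=M_{j,k}/x_i^{c_i-2}$ yields
$$x_i^2E_{jk}\;=\;P\cdot E_{i-1,i+1}\;+\;M_{i,k}\cdot E_{j,i}\;+\;M_{j,i}\cdot E_{i,k}\;+\;E_{j,i}E_{i,k};$$
the bound $c_l\geq 2$ everywhere forces each monomial coefficient to have total degree $\geq 2$, and the cross term contributes order $\geq 2(m+1)\geq 2s+(m+1)$ by the hypothesis $m\geq 2s-1$. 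The two remaining sub-configurations of case C ($j=i-1$ or $k=i+1$) are handled with $a=1$ by direct syzygies such as $x_iE_{i-1,k}=x_{i-1}E_{i,k}+(\text{monomial})\cdot E_{i-1,i+1}$.

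The main obstacle is essentially one of bookkeeping: verifying in every sub-configuration that the required identity can be closed using \emph{only} listed equations, and that the resulting monomial coefficients carry enough total $x$-degree to absorb the $x_i^a$ factor. The uniform lower bound $c_l\geq 2$ on the continued-fraction coefficients, together with the hypothesis $m\geq 2s-1$, are precisely what make the order estimates go through uniformly across all cases.
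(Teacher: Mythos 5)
Your proof is correct and follows essentially the same route as the paper's: both reduce the reverse inclusion to explicit syzygies expressing $x_i^{a}E_{jk}$ as a combination of the listed binomials with monomial coefficients of positive order, and then cancel $x_i^{a}$, whose order along $\gamma$ is exactly $as$. The only deviation is the case $1\leq j<i-1$, $i+1<k\leq e$, where the paper chains two linear syzygies (first bounding $ord_{\gamma}E_{j,i+1}$ via (4.3), then feeding it into (4.5)), whereas you use a single quadratic identity in $x_i^2$ whose cross term $E_{j,i}E_{i,k}$ is absorbed using $m\geq 2s-1$; both variants work.
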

\begin{dem}The inclusion $``\subset''$ is an immediate consequence of lemma \ref{eff}. To get the other inclusion, it is enough to check that for 
every $\gamma \in \mathbb{A}^e_m$ enjoying the conditions listed above, we also have 
$ord_{\gamma}E_{jh}\geq m+1$ for $1\leq j<h-1\leq e-1.$\\
If $i<j,$ the syzygie 
$$
 x_iE_{jh}-x_jE_{ih}+x_{j+1}^{c_{j+1}-2}\cdots x_{h-1}^{c_{h-1}-2}x_{h-1}E_{i,j+1}=0 ~~~~~~~~~~~~~~~~~~~~~~~~~~(4.1)
$$

implies that $ord_{\gamma}E_{jh}\geq m+1,$ because $ord_{\gamma}x_j$ and $ord_{\gamma}x_{h-1}\geq s$ and
$ord_{\gamma}x_i=s.$\\
Similarly if $h<i,$ the syzygie 
$$x_iE_{jh}-x_hE_{ji}+x_{j+1}x_{j+1}^{c_{j+1}-2}\cdots x_{h-1}^{c_{h-1}-2}E_{h-1,i}=0~~~~~~~~~~~~~~~~~~~~~~~~~~~(4.2)$$
implies that $ord_{\gamma}E_{jh}\geq m+1,$ because $ord_{\gamma}x_h$ and $ord_{\gamma}x_{j+1}\geq s$ and
$ord_{\gamma}x_i=s.$\\
Assume now that $1\leq j<i-1$ and $h=i+1;$ the syzygie

$$x_{i+1}E_{ji}-x_iE_{j,i+1}+x_{j+1}x_{j+1}^{c_{j+1}-2}\cdots x_{i-1}^{c_{i-1}-2}E_{i-1,i+1}=0~~~~~~~~~~~~~~~~~~~(4.3)$$
implies that $ord_{\gamma}E_{j,i+1}\geq m+1.$  \\
Similarly if $j=i-1$ and  $i+1<h\leq e,$ the syzygie 
$$x_{i-1}E_{ih}-x_iE_{i-1,h}+x_{i+1}^{c_{i+1}-2}\cdots x_{h-1}^{c_{h-1}-2}x_{h-1}E_{i-1,i+1}=0~~~~~~~~~~~~~~~~~(4.4)$$
implies that $ord_{\gamma}E_{i-1,h}\geq m+1.$ \\
Finally , if $1\leq j< i-1$ and $i+1<h\leq e,$ the syzygie 
$$x_jE_{ih}-x_iE_{jh}+x_{i+1}^{c_{i+1}-2}\cdots x_{h-1}^{c_{h-1}-2}x_{h-1}E_{j,i+1}=0~~~~~~~~~~~~~~~~~~~~~~~~~(4.5)$$ implies that
$ord_{\gamma}E_{j,h}\geq m+1,$ taking into account that we have shown above that $ord_{\gamma}E_{j,i+1}\geq m+1.$\\
 
\end{dem}

\begin{prop}\label{irr}
 
 For $i=2,\cdots,e-1,~m \in \mathbb{N},~  s \in \{1,\ldots,\lceil\frac{m}{2}\rceil\}$ and $l \in \{s,\ldots,m_i^s \}, $
$C_{i,m}^{s,l}$ is irreducible, and its codimension in  $\mathbb{A}^e_m$ is equal to $$ se+(m-(2s-1))(e-2).$$  
\end{prop}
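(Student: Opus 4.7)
The plan is to show that $D_{i,m}^{s,l}$ is isomorphic to an open subset of an affine space of dimension $N := 2(m+1) + (e-4)s$. This will give irreducibility of $C_{i,m}^{s,l} = \overline{D_{i,m}^{s,l}}$ and the claimed codimension $e(m+1) - N = (m+1)(e-2) - (e-4)s = se + (m-2s+1)(e-2)$ in $\mathbb{A}^e_m$.

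First I would use Lemmas \ref{inc} and \ref{syz} to get a workable description of $D_{i,m}^{s,l}$. Since $D_{i,m}^{s,l}$ lies in $\pi_{m,2s-1}^{-1}(C_{i,2s-1}^{s,s}) \cap D(x_i^{(s)} x_{i+1}^{(l)})$, Lemma \ref{syz} implies that on this open set the scheme is cut out by the order conditions ($ord_\gamma x_j \geq s$ for all $j$, $ord_\gamma x_i = s$, $ord_\gamma x_{i+1} = l$) together with the vanishing modulo $t^{m+1}$ of $E_{i-1,i+1}$, of $E_{j,i}$ for $1 \leq j < i-1$, and of $E_{i,h}$ for $i+1 < h \leq e$; the remaining $E_{jh}$ follow automatically from the syzygies (4.1)--(4.5).

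Next I would parameterize $D_{i,m}^{s,l}$ by writing $x_i(t) = t^s a(t)$ with $a(0) \neq 0$, $x_{i+1}(t) = t^l b(t)$ with $b(0) \neq 0$, and $x_j(t) = t^s a_j(t)$ for $j \neq i, i+1$. The equation $E_{i-1,i+1}$ reads
\[
a_{i-1}\, b \;\equiv\; t^{(c_i-1)s-l}\, a^{c_i} \pmod{t^{m+1-s-l}},
\]
whose $t$-exponent is non-negative since $l \leq (c_i-1)s$; as $b$ is a unit this determines $a_{i-1}$ modulo $t^{m+1-s-l}$, leaving the top $l$ coefficients $\beta_{i-1}$ as free parameters. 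Working outward from $i$, each equation $E_{j,i}$ for $j = i-2, \ldots, 1$ and each $E_{i,h}$ for $h = i+2, \ldots, e$ takes the form $a_j\, a \equiv t^{\star_j}\, M_j \pmod{t^{m+1-2s}}$, where $M_j$ is a monomial in the previously determined $a_{j'}$'s and $\star_j \geq 0$ (using $c_\nu \geq 2$ for all $\nu$ and $l \geq s$); inverting the unit $a$ then determines $a_j$ modulo $t^{m+1-2s}$ and leaves the top $s$ coefficients $\beta_j$ free.

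Collecting all free parameters yields
\[
(m+1-s) + (m+1-l) + l + (e-3)s \;=\; 2(m+1) + (e-4)s \;=\; N
\]
coefficients, and the assignment $(a, b, \{\beta_j\}_{j \neq i, i+1}) \mapsto \gamma$ gives a polynomial bijection between an open subset $U \subset \mathbb{A}^N$ (cut out by $a(0), b(0) \neq 0$) and $D_{i,m}^{s,l}$, with polynomial inverse after inverting $a(0)$ and $b(0)$, hence an isomorphism of varieties; thus $D_{i,m}^{s,l}$ is irreducible of dimension $N$ and the codimension formula follows. The main obstacle I anticipate is the careful bookkeeping in the recursive step: each $a_j$ must be expressed as a literal polynomial in the free parameters (which is precisely what the non-negativity of the shifts $\star_j$ guarantees), and the inverse map must be built in the right order so that each $\beta_j$ reads off cleanly from the jet coordinates regardless of the dependence of previously constructed $a_{j'}$'s on earlier $\beta$'s.
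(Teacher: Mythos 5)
Your proof is correct and follows essentially the same route as the paper: after using Lemmas \ref{inc} and \ref{syz} to cut $D_{i,m}^{s,l}$ out by the distinguished equations $E_{i-1,i+1}$, $E_{j,i}$, $E_{i,h}$ only, you solve this triangular system recursively outward from $i$ (inverting the unit leading coefficients of $x_i$ and $x_{i+1}$) to exhibit $D_{i,m}^{s,l}$ as a torus times an affine space and count free parameters, exactly as in the paper's proof. The parameter count $2(m+1)+(e-4)s$ matches the paper's codimension $m+s+1+(e-3)(m-s+1)=(e-2)(m+1)-(e-4)s$.
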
 
\begin{dem}
First, since the ideal defining $S$ in $\mathbb{A}^e$ is generated by $E_{jh},~1\leq j<h-1\leq e-1,$ we have that
$$D_{i,m}^{s,l}\subset U_{i,m}^{s,l}:=\{\gamma \in \mathbb{A}^e_m;~ord_{\gamma}E_{ij}(\textnormal{resp.}~ord_{\gamma}E_{ji})\geq m+1~
\textnormal{for}~i<j-1\leq e-1$$
$$(\textnormal{resp.}~1\leq j<i-1),~ 
ord_{\gamma}E_{i-1,i+1}\geq m+1,~ord_{\gamma}x_i=s,~ord_{\gamma}x_{i+1}=l\}.$$
For $\gamma \in U_{i,m}^{s,l},$ we have by the proof of \ref{inc} that for $j=1,\cdots,e,~ord_{\gamma}x_{j} \geq s.$ It follows from lemma
\ref{syz} that $D_{i,m}^{s,l}=U_{i,m}^{s,l}.$

The irreducibility of $C_{i,m}^{s,l}$ follows from the fact that $D_{i,m}^{s,l}=U_{i,m}^{s,l}$ is isomorphic to the product 
of a two dimensional torus by an affine space. Indeed, set $x_j \circ \gamma=\sum_{0\leq \nu\leq m}x_j^{(\nu)}t^{(\nu)},$ 
$~1\leq j\leq e.$
If $ord_{\gamma}x_i=s$ and $ord_{\gamma}x_{i+1}=l,$ we have $ord_{\gamma}E_{i-1,i+1}\geq m+1,$  if and only if $x_{i-1}^{(\nu)}=0$ 
for $0\leq \nu \leq m-l$ if $c_is \geq m+1$ (resp. $x_{i-1}^{(\nu)}=0$ for $0\leq \nu \leq c_is-l$ and is a polynomial function
 of $x_i^{(s)},\cdots,x_i^{(m-c_is+l)},1/x^{(l)}_{i+1},x_{(i+1)}^{(l)},\cdots x_{(i+1)}^{(m-c_is+l)}$ for $c_is-l\leq \nu \leq m-l if c_is<m+1).$
Similarly , $ord_{\gamma}E_{ij}$(resp. $ord_{\gamma}E_{ji})\geq m+1$ for $i+1\leq j \leq e$ (resp. $1\leq j<i-1)$ if and only 
if $x_j^{(\nu)}=0$ for $0\leq \nu<s$ and is a polynomial function of $1/x_i^{(s)},x_i^{(s)},\cdots,x_i^{(m-s)},x_{i+1}^{(l)},\cdots,
x_{i+1}^{(m-l)}$ for $s\leq \nu \leq m-s$(resp. $x_j^{(\nu)}=0$ for $0\leq \nu <s$ and is a polynomial function of 
$1/x^{(s)}_{i},x^{(s)}_{i},\cdots,x^{(m-s)}_{i},x^{(s)}_{i-1},\cdots,x^{(m-s)}_{i-1}$ for $s\leq \nu\leq m-s$ since 
$ord_{\gamma}x_{i-1}\geq s$ as soon as $ord_{\gamma}E_{i-1,i+1}\geq m+1).$ As a consequence, the codimension of $D^{s,l}_{i,m},$
hence of its closure $C^{s,l}_{i,m},$  is $$m+s+1+(e-i-1)(m-s+1)+(i-2)(m-s+1)=$$
$$(e-2)(m+1)-(e-4)s=se+(m-(2s-1))(e-2).$$

\end{dem}
\begin{prop}\label{nv} 
\begin{enumerate}
 \item For $i=2,\cdots, e-1$ and $m,s\in \mathbb{N}$ such that $m\geq 2s-1$ and $l \in \{s,\ldots,m_i^s\},$ we have $\Psi^{-1}_m(D_{i,m}^{s,l})\not= 
\emptyset.$
\item For $s\in \mathbb{N},~s\geq 1,~ Cont^s(x_1)\cap Cont^s(x_2)\not=\emptyset.$
\end{enumerate}
\end{prop}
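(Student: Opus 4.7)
The plan is to produce explicit arcs on $S$ via the toric structure. Recall $S = \textnormal{Spec}\,\field{K}[\sigma^\vee \cap M]$; for any $v \in \sigma \cap N$, the assignment $x^u \mapsto t^{\langle u,v\rangle}$ defines a $\field{K}$-algebra homomorphism $\field{K}[\sigma^\vee \cap M] \to \field{K}[[t]]$, hence a monomial arc $\gamma_v \in S_\infty$, with $ord_{\gamma_v}(x^u) = \langle u, v\rangle$ exactly. Both statements will be proved by choosing suitable $v \in \sigma \cap N$ and checking that the monomial arc $\gamma_v$ realizes the required orders.

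For part (2), I take $v = (s, s)$. Since $0 < p < q$ one has $\langle u_1, v\rangle = s \geq 1$ and $\langle u_e, v\rangle = (q-p)s > 0$, so $v \in \sigma$. The recurrence $u_{j-1} + u_{j+1} = c_j u_j$ combined with the boundary values $u_1 = (0,1)$ and $u_e = (q, -p)$ gives $u_e = q u_2 - p u_1$, forcing $u_2 = (1,0)$; hence $ord_{\gamma_v}(x_1) = s$ and $ord_{\gamma_v}(x_2) = s$, so $\gamma_v \in Cont^s(x_1) \cap Cont^s(x_2)$.

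For part (1), the key input is the classical fact (following from the same recurrence, see \cite{O}) that $u_i, u_{i+1}$ form a $\field{Z}$-basis of $M$: the determinant $\det(u_j, u_{j+1})$ is preserved by the relation and equals $\det(u_1, u_2) = -1$. Letting $v_i^*, v_{i+1}^* \in N$ be the dual basis, I set $v := s\, v_i^* + l\, v_{i+1}^*$, which lies in $N$ and satisfies $\langle u_i, v\rangle = s$, $\langle u_{i+1}, v\rangle = l$ by construction. The monomial arc $\gamma_v$ will then realize the orders required to witness $\Psi_m^{-1}(D_{i,m}^{s,l}) \neq \emptyset$, provided $v \in \sigma$.

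The heart of the argument is thus verifying $v \in \sigma$, equivalently $\langle u_1, v\rangle \geq 0$ and $\langle u_e, v\rangle \geq 0$. Set $a_j := \langle u_j, v\rangle$. The recurrence on $u_j$ translates into the discrete convexity identity $(a_{j+1} - a_j) - (a_j - a_{j-1}) = (c_j - 2) a_j$, non-negative whenever $a_j \geq 0$ since $c_j \geq 2$. Directly $a_i = s$, $a_{i+1} = l$, and $a_{i-1} = c_i s - l$; the hypothesis $s \leq l \leq (c_i - 1)s$ gives $a_{i\pm 1} \geq s$, $a_{i+1} - a_i \geq 0$ and $a_i - a_{i-1} \leq 0$. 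A straightforward induction on $j$, propagating the non-negativity of $a_j$ together with the monotone-differences inequality in both directions, then yields $a_j \geq s > 0$ for every $j = 1, \ldots, e$, which settles the claim. The only delicate point is this downward bookkeeping, where the individual differences are non-positive but the non-decreasing-differences property forces $a_j$ to grow (weakly) as $j$ decreases from $i$ toward $1$.
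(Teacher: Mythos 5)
Your proof is correct and is essentially the paper's own argument: both reduce the statement to finding a lattice point $v\in\sigma\cap N$ with $\langle u_i,v\rangle=s$ and $\langle u_{i+1},v\rangle=l$ (using that $u_i,u_{i+1}$ form a $\mathbb{Z}$-basis of $M$), and both verify $v\in\sigma$ by propagating the inequalities $\langle u_j,v\rangle\geq s$ through the recurrence $u_{j-1}+u_{j+1}=c_ju_j$ by ascending and descending induction from $j=i$. Your "discrete convexity" phrasing is just a repackaging of the paper's monotonicity of the $\langle u_j,v\rangle$ on either side of $i$.
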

\begin{dem} 
(1)-We will prove that there exists an arc $h$ on $S$, whose generic point lies in the torus, and such that $h \in Cont^s(x_i) \cap Cont^l(x_{i+1}).$ Note 
that the data of such an arc $h$ on $S$ is equivalent to the data of a vector $v_h=(a,b) \in \sigma \cap N;$ moreover 
$\forall u \in M\cap \sigma^\vee$, we have that $h \in Cont^{v_h.u}(x^u),$ where we denote by $v_h.u$ the scalar product of 
$v_h$ and $u$, and by $x^u$ the regular function defined by $u$ on $S$. Let $u_i, i=1,\cdots ,e,$ be the system of minimal generators of 
$\sigma^\vee \cap M,$ defined in 2.2 such that $x^{u_i}=x_i.$ Therefore to prove that there exists an arc $h$ as above, 
it is sufficient to prove that there exists $(a,b) \in \sigma \cap N$ such that $(a,b).u_i=s$ and $(a,b).u_{i+1}=l$ where   
$x^{u_i}=x_i,$ and $ x_{i+1}=x^{u_{i+1}}.$ Since $u_{i}$ and $u_{i+1}$ determine a $\mathbb{Z}-$basis of $M,$ there exists a unic $(a,b) \in N$ such that 
$(a,b).u_i=s$ and $(a,b).u_{i+1}=l.$ Let's prove that $(a,b)$ is in the interior of $\sigma,$ i.e. that for $j=1,\cdots,e,~ (a,b).u_j >0.$ 
Since $u_{i-1}=c_iu_i-u_{i+1},$ we have that $(a,b).u_{i-1}=c_is-l$ which is greater than or equal to $s$ because by hypothesis we have 
$s\leq l \leq s(c_i-1).$ Similarly we have that $(a,b).u_{i+2}=c_{i+1}l-s$ which is greater than or equal to $l.$ Since $c_i\geq 2,$ for $i=1,\cdots,e,$ by 
descending (repectively ascending) induction we find that $(a,b).u_{j-1}\geq (a,b).u_{j},$ for $j=2,\cdots,i$ (respectively  
 $(a,b).u_{j-1} \leq (a,b).u_{j},$ for $j=i+2,\cdots,e)$ and the proposition follows. \\
(2)-We have that $u_1=(0,1), u_2=(1,0).$ we need to prove that the unic vector $v=(a,b) \in N$ such that $(a,b).(0,1)=b=s$ and   
$(a,b).(1,0)=a=s,$ belongs also to $\sigma,$ and this is clear.  
 
\end{dem}
\begin{lem}\label{ac}
For $i=2,\ldots,e-1,$ let $X^i=$Spec$\mathbb{K}[x_{i-1},x_i,x_{i+1}]/(x_{i-1}x_{i+1}-x_i^{c_i}).$ For $m\geq 2s,$ let

$$V_{i,m}^s:=\{\gamma \in X^i_m, ord_{\gamma}(x_j)\geq s,~j=i-1,i+1,~ord_{\gamma}(x_i)=s\}$$
and for $l\in\{s,\ldots,m_i^s\},$ let 
$$\Delta_{i,m}^{s,l}:=\{\gamma \in X^i_m, ord_{\gamma}(x_i)= s,~ord_{\gamma}(x_{i+1})=l\}.$$
Then, the irreducible components of $\overline{V_{i,m}^s}$ are the $\overline{\Delta_{i,m}^s},~l\in\{s,\ldots,m_i^s\}.$ 

\end{lem}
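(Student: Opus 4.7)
First, I would establish irreducibility of each $\Delta_{i,m}^{s,l}$ by an explicit parametrization. On the open set of $\mathbb{A}^3_m$ where $x_i^{(s)}$ and $x_{i+1}^{(l)}$ are invertible, write the $m+1$ jet equations $F^{(j)}=0$ coming from $x_{i-1}x_{i+1}-x_i^{c_i}$. The equations $F^{(j)}$ for $0\le j<l$ hold identically since both $x_{i-1}x_{i+1}$ and $x_i^{c_i}$ have $t$-order at least $l$. For $l\le j\le m$, each $F^{(j)}$ has leading monomial $x_{i-1}^{(j-l)}x_{i+1}^{(l)}$, so with $x_{i+1}^{(l)}$ invertible one solves uniquely by induction on $j$ for $x_{i-1}^{(j-l)}$ in terms of the previously determined coefficients. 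This exhibits $\Delta_{i,m}^{s,l}$ as isomorphic to the product of a two-dimensional torus (with coordinates $x_i^{(s)}$ and $x_{i+1}^{(l)}$) by an affine space of dimension $2m-s$; in particular it is irreducible of dimension $2m-s+2$.

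Second, I would show $V_{i,m}^s\subset \bigcup_{l=s}^{m_i^s}\overline{\Delta_{i,m}^{s,l}}$. Given $\gamma \in V_{i,m}^s$, let $l:=ord_\gamma(x_{i+1})$, so $l\ge s$. If $l\le m_i^s$, then $\gamma \in \Delta_{i,m}^{s,l}$ directly. Otherwise, reading the defining equation mod $t^{m+1}$ shows $l > m_i^s$ forces $c_is\ge m+1$ (so $x_i^{c_i}\equiv 0 \pmod{t^{m+1}}$, possibly with $x_{i+1}\circ\gamma=0$), and I would realize $\gamma$ as a limit in $\overline{\Delta_{i,m}^{s,m_i^s}}$ via the one-parameter family $\gamma_\epsilon$ replacing $x_{i+1}\circ\gamma$ by $(x_{i+1}\circ\gamma)+\epsilon t^{m+1-s}$: the perturbed jet lies in $\Delta_{i,m}^{s,m+1-s}$ because $ord_\gamma(x_{i-1})\ge s$ forces $(x_{i-1}\circ\gamma)\cdot\epsilon t^{m+1-s}\equiv 0 \pmod{t^{m+1}}$, and $\gamma_\epsilon\to\gamma$ as $\epsilon\to 0$.

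Third, I would verify that no $\overline{\Delta_{i,m}^{s,l}}$ contains another, which together with the previous two steps identifies the irreducible components. For $l<l'$, every point of $\Delta_{i,m}^{s,l}$ has $x_{i+1}^{(l)}\neq 0$, while $\overline{\Delta_{i,m}^{s,l'}}$ lies inside the closed locus $\{x_{i+1}^{(l)}=0\}$ coming from $ord_\gamma(x_{i+1})\ge l'>l$. For $l>l'$, the parametrization from Step~1 shows the generic point of $\Delta_{i,m}^{s,l}$ has $ord_\gamma(x_{i-1})=\min(c_is,\,m+1)-l$, strictly less than the uniform lower bound $\min(c_is,\,m+1)-l'$ for $ord_\gamma(x_{i-1})$ valid on all of $\overline{\Delta_{i,m}^{s,l'}}$. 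The main obstacle is the edge jets arising when $c_is\ge m+1$ and $ord_\gamma(x_{i+1})>m_i^s$: these belong to $V_{i,m}^s$ but not to any $\Delta_{i,m}^{s,l}$, and must be placed in $\overline{\Delta_{i,m}^{s,m_i^s}}$ through the explicit perturbation above.
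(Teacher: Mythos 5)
Your proof is correct, but it handles the key case differently from the paper. The paper splits on whether $c_is\ge m+1$: when it is, everything is linear and the argument is essentially your Step 2 (with your perturbation $\gamma_\epsilon$ replaced by the observation that a jet with $ord_\gamma x_{i+1}>m+1-s$ already lies in the linear space $V(x_{i-1}^{(0)},\ldots,x_{i-1}^{(s-1)},x_i^{(0)},\ldots,x_i^{(s-1)},x_{i+1}^{(0)},\ldots,x_{i+1}^{(m-s)})=\overline{\Delta_{i,m}^{s,m+1-s}}$); when $c_is<m+1$, the paper instead writes $V_{i,m}^s$ and each $\Delta_{i,m}^{s,l}$ as preimages under $\pi^i_{m,c_is-1}$ of the corresponding sets at level $c_is-1$, and deduces irreducibility of $(\pi^i_{m,c_is-1})^{-1}(\overline{\Delta_{i,c_is-1}^{s,l}})$ from its description as (affine space) $\times$ ($(m-c_is)$-jets of an $A_{c_i-1}$-type surface), invoking the irreducibility result of Section 3. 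Your uniform elimination argument in Step 1 — solving the triangular system $F^{(j)}=0$, $l\le j\le m$, for $x_{i-1}^{(j-l)}$ over the torus $D(x_i^{(s)}x_{i+1}^{(l)})$ — replaces that reduction and is self-contained (it is the same computation the paper performs in the proof of Proposition \ref{irr} for $e\ge 4$); it also yields the dimension $2m-s+2$ directly and makes the non-containment check in your Step 3 transparent, a point the paper leaves implicit (its components are visibly distinct of equal dimension). What you lose relative to the paper is the explicit affine-bundle structure over the lower-level jet scheme, which the paper reuses later. Two small points to make explicit: the inclusion $\Delta_{i,m}^{s,l}\subset V_{i,m}^s$ (needed for $\bigcup_l\overline{\Delta_{i,m}^{s,l}}\subset\overline{V_{i,m}^s}$) follows from your parametrization since $ord_\gamma x_{i-1}\ge \min(c_is,m+1)-l\ge s$ for $l\le m_i^s$; and in Step 2 one should note $m+1-s\le(c_i-1)s$ when $c_is\ge m+1$, so that $\Delta_{i,m}^{s,m+1-s}$ is indeed among the listed strata.
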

\begin{dem}
First, assume that $m+1\leq c_is,$ so that $m_i^s=m+1-s.$ We have that $$V_{i,m}^s= \{\gamma \in \mathbb{A}^3_m~;~ord_{\gamma}x_j\geq s, j=i-1,i+1,
ord_{\gamma}x_i=s $$
$$\textnormal{and} ~~ord_{\gamma}x_{i-1}+ord_{\gamma}x_{i+1}\geq m+1\} $$
and for $l\in\{s,\ldots,m+1-s\},$
$$\Delta_{i,m}^{s,l}=\{\gamma \in \mathbb{A}^3_m~;~ord_{\gamma}x_i= s, ord_{\gamma}x_{i+1}=l,ord_{\gamma}x_{i-1}\geq m+1-l\}=$$

$$V(x_{i-1}^{(0)},\ldots,x_{i-1}^{(m-l)},x_i^{(0)},\ldots,x_i^{(s-1)},x_{i+1}^{(0)},\ldots,x_{i+1}^{(l-1)})\cap D(x^{(s)}_{i}x_{i+1}^{(l)}). $$
Since $s\leq l\leq m+1-s,$ we have that  $\Delta_{i,m}^{s,l}\subset V_{i,m}^s,$ so $\cup_{s\leq l\leq m+1-s}\overline{\Delta_{i,m}^{s,l}}
\subset \overline{V_{i,m}^s}.$ Now for $\gamma \in V_{i,m}^s,$ we have that $ord_{\gamma}x_i=s, l:=ord_{\gamma}x_{i+1} \geq s$ and
$ord_{\gamma}x_{i-1}\geq m+1-l.$ If $l\leq m+1-s,$ we thus have that $\gamma \in \Delta_{i,m}^{s,l};$ if $l>m+1-s,$ we have that 
$ord_{\gamma}x_{i-1}\geq s,$ hence $\gamma \in V(x_{i-1}^{(0)},\ldots,x_{i-1}^{(s-1)},x_i^{(0)},\ldots,x_i^{(s-1)},x_{i+1}^{(0)},
\ldots,x_{i+1}^{(m-s)})=\overline{\Delta_{i,m}^{s,m+1-s}},$ hence the claim.  \\
Now assume that $c_is<m+1,$ so that $m_i^s=(c_i-1)s.$ For $l\in\{s,\ldots,(c_i-1)s\}$ and $\gamma  \in \Delta_{i,m}^{s,l},$ we thus have that 
$ord_{\gamma}x_{i}=s,ord_{\gamma}x_{i+1}=l\geq s,$ and $ord_{\gamma}x_{i-1}+l=c_is,$ hence 
$ord_{\gamma}x_{i-1}=c_is-l\geq s,$ therefore $\Delta_{i,m}^{s,l} \subset V_{i,m}^s$ and $\cup_{s\leq l\leq m+1-s}\overline{\Delta_{i,m}^{s,l}}
\subset \overline{V_{i,m}^s}.$ \\ On the other hand  $V_{i,m}^s=(\pi^i_{m,c_is-1})^{-1}$ where
$\pi^i_{m,c_is-1}:X_m^i\longrightarrow X_{c_is-1}^i$ is the natural map and for $s\leq l \leq (c_i-1)s,
\Delta_{i,m}^{s,l}=(\pi^i_{m,c_is-1})^{-1}(\Delta_{i,c_is-1}^{s,l}).$ Now we have just seen that 
$\overline{V_{i,c_is-1}^{s}}=\cup_{s\leq l \leq (c_i-1)s}\overline{\Delta_{i,c_is-1}^{s,l}}$
and that \\
 $\overline{\Delta_{i,c_is-1}^{s,l}}=V(x_{i-1}^{(0)},\ldots,x_{i-1}^{(c_is-l-1)},x_i^{(0)},\ldots,x_i^{(s-1)},x_{i+1}^{(0)},\ldots,x_{i+1}^{(l-1)}).$

As a consequence $(\pi^i_{m,c_is-1})^{-1}(\overline{\Delta_{i,c_is-1}^{s,l}})$ is isomorphic to a product of an affine space by the 
space of $(m-c_is)-$jets of the surface  Spec$\mathbb{K}[x^{(c_is-l)}_{i-1},x^{(s)}_i,x^{(l)}_{i+1}]/({x_{i-1}}^{(c_is-l)}{x_{i+1}}^{(l)}-
{x_i^{(s)}}^{c_i}),$
and this latter is irreducible by section $3,$ hence coincides with $\overline{\Delta_{i,c_is-1}^{s,l}}.$ So 
$\overline{V_{i,m}^{s}} \subset \cup_{s\leq l \leq (c_i-1)s}\overline{\Delta_{i,c_is-1}^{s,l}},$ hence the claim.

\end{dem}

\begin{prop}\label{gs}
 Let $m,s \in \mathbb{N}$ such that $m\geq 2s-1.$
\begin{enumerate}
\item For $i=2,\cdots,e-1,$ the irreducible components of $\overline{\pi_{m,2s-1}^{-1}(C_{i,2s-1}^{s,s}\cap D(x_{i}^{(s)}))}$ 
are the $C_{i,m}^{s,l}, l\in \{s,\cdots,m_i^s\}.$ 
\item  For $i=1,e,$ we have that $\pi_{m,2s-1}^{-1}(C_{i,2s-1}^{s,s}\cap D(x_{i}^{(s)}))$ is irreducible of codimension 

$$se+(m-(2s-1))(e-2)$$
in $\mathbb{A}_m^e.$

\end{enumerate}
\end{prop}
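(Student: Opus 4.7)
For part (1), the plan is to stratify the preimage $\pi_{m,2s-1}^{-1}(C_{i,2s-1}^{s,s} \cap D(x_i^{(s)}))$ according to $l := \textnormal{ord}_{\gamma} x_{i+1}$ and then invoke Lemma \ref{ac} on the three-variable subsurface $X^i = \textnormal{Spec}\,\mathbb{K}[x_{i-1},x_i,x_{i+1}]/(x_{i-1}x_{i+1} - x_i^{c_i})$. The coordinate projection $\mathbb{A}^e_m \to X^i_m$ (forgetting every variable except $x_{i-1}, x_i, x_{i+1}$) sends the preimage, as characterized by Lemma \ref{syz}, into $V_{i,m}^s$. Lemma \ref{ac} then forces either $l \in \{s,\ldots,m_i^s\}$, in which case $\gamma \in D_{i,m}^{s,l}$, or $l > m_i^s$, in which case $\gamma$ lies in $\overline{D_{i,m}^{s,m_i^s}}$. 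Therefore
\[
\overline{\pi_{m,2s-1}^{-1}(C_{i,2s-1}^{s,s} \cap D(x_i^{(s)}))} = \bigcup_{l=s}^{m_i^s} C_{i,m}^{s,l}.
\]
By Proposition \ref{irr} each $C_{i,m}^{s,l}$ is irreducible of the same codimension $se + (m-(2s-1))(e-2)$, so no strict inclusion is possible among them and these are precisely the irreducible components.

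For part (2), treat $i=1$; the case $i=e$ is symmetric. The difference from part (1) is that the would-be variable $x_0$ does not exist, so no three-variable subsurface analogous to $X^i$ is available and no stratification via Lemma \ref{ac} arises. The analogue of Lemma \ref{syz} for $i=1$, obtained from syzygy (4.1) with $i=1$, characterizes the preimage as the locus where $\textnormal{ord}_\gamma x_j \geq s$ for all $j$, $\textnormal{ord}_\gamma x_1 = s$, and $\textnormal{ord}_\gamma E_{1,h} \geq m+1$ for $h=3,\ldots,e$. Both terms of $E_{1,h}$ have $t$-order at least $2s$, so the coefficient of $t^\nu$ is trivial for $\nu < 2s$, while for $2s \leq \nu \leq m$ it reads $x_1^{(s)} x_h^{(\nu-s)} + P_{\nu,h} = 0$, with $P_{\nu,h}$ a polynomial in previously determined jet coordinates, and so can be solved for $x_h^{(\nu-s)}$ on $D(x_1^{(s)})$. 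Inductively on $h = 3,\ldots,e$ one expresses $x_h^{(s)},\ldots,x_h^{(m-s)}$ as polynomial functions of the free parameters $x_1^{(\nu)}, x_2^{(\nu)}$ ($s \leq \nu \leq m$), $x_h^{(m-s+1)},\ldots,x_h^{(m)}$, and $1/x_1^{(s)}$. This realizes the preimage as the product of a two-dimensional torus by an affine space, giving irreducibility, and a direct count yields codimension $2s + (e-2)(m-s+1) = se + (m-(2s-1))(e-2)$.

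The main obstacle is in part (1): one must check that the projection $\mathbb{A}^e_m \to X^i_m$ actually identifies the stratification of the preimage with the one on $\overline{V_{i,m}^s}$, namely that the remaining $E_{jh}$ equations (those not involving only $x_{i-1}, x_i, x_{i+1}$) do not carve out further components on $D(x_i^{(s)})$. This is exactly what Lemma \ref{syz} provides via the syzygies (4.1)--(4.5): once $\textnormal{ord}_\gamma x_i = s$ together with the conditions on $E_{j,i}, E_{i,j}, E_{i-1,i+1}$ hold, every other $E_{jh}$ automatically vanishes to order $m+1$. Coupled with the equidimensionality from Proposition \ref{irr}, this disposes of the inclusion step between the $C_{i,m}^{s,l}$'s and concludes the identification of irreducible components.
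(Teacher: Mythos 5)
Your proof follows essentially the same route as the paper's: for (1) you project onto the three-variable model $X^i$, use Lemma \ref{syz} to see that over $D(x_i^{(s)})$ this projection is a trivial fibration onto $V_{i,m}^s$, pull back the component decomposition of Lemma \ref{ac}, and conclude by the equidimensionality of Proposition \ref{irr} together with the distinctness of the $C_{i,m}^{s,l}$; for (2) you use the syzygy to reduce to the equations $E_{1,h}$ and solve for the dependent jet coordinates on $D(x_1^{(s)})$, exactly as in the paper. The only quibble is that for $i=1$ the resulting parametrization is $\mathbb{K}^*$ (not a two-dimensional torus) times an affine space, since only $\textnormal{ord}_\gamma x_1 = s$ is imposed exactly while $\textnormal{ord}_\gamma x_2 \geq s$ is an inequality; your codimension count $2s+(e-2)(m-s+1)=se+(m-(2s-1))(e-2)$ is nevertheless correct.
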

 
\begin{dem}
(1) By the lemmas \ref{inc} and \ref{syz}, we have that $D^{s,l}_{i,m}\subset \pi_{m,2s-1}^{-1}(C_{i,2s-1}^{s,s}\cap D(x_{i}^{(s)}))=$

$$\{\gamma \in \mathbb{A}^e_m~;~ord_{\gamma}x_{j}\geq s,~ j=1,\cdots,e,~ord_{\gamma}x_{i}=s,
~ord_{\gamma}E_{i-1,i+1}\geq m+1,$$
$$ord_{\gamma}E_{j,i}(resp.ord_{\gamma}E_{i,j})\geq m+1,~\textnormal{for} 1\leq j <i-1(resp.~i<j-1\leq e-1)  \}. $$
The projection $\mathbb{A}^e\longrightarrow \mathbb{A}^3$ which sends $(x_1,\ldots,x_e)$ to
$(x_{i-1},x_i,x_{i+1})$ induces a natural map $p^i:S\longrightarrow X^i$ and the induced map $p_m^i:S_m\longrightarrow X_m^i$ sends 
$\pi_{m,2s-1}^{-1}(C_{i,2s-1}^{s,s}\cap D(x_{i}^{(s)}))$ (resp. $D^{s,l}_{i,m})$ into 
$V_{i,m}^s:=\{\gamma \in X^i_m, ord_{\gamma}(x_j)\geq s,~j=i-1,i+1,~ord_{\gamma}(x_i)=s\}$ (resp.
$\Delta_{i,m}^{s,l}:=\{\gamma \in X^i_m, ord_{\gamma}(x_i)= s,~ord_{\gamma}(x_{i+1})=l\}).$ Now in view of lemma \ref{syz},
the maps
 $$\pi_{m,2s-1}^{-1}(C_{i,2s-1}^{s,s}\cap D(x_{i}^{(s)}))\longrightarrow V_{i,m}^s ~~\textnormal{and}~~D^{s,l}_{i,m} \longrightarrow \Delta_{i,m}^{s,l}$$
are isomorphic to a trivial fibration of rank $s(e-3).$ By lemma \ref{ac}, the irreducible components of $\overline{V_{i,m}^s}$ 
are the $\overline{\Delta_{i,m}^{s,l}},~l\in\{s,\ldots,m_i^s\}.$ Since $V_{i,m}^s=\overline{V_{i,m}^s}\cap D(x_i^{(s)}),$
we thus have $V^s_{i,m}=\cup_l(\overline{\Delta_{i,m}^{s,l}}\cap D(x_i^{(s)}));$  so 
$\pi_{m,2s-1}^{-1}(C_{i,2s-1}^{s,s}\cap D(x_{i}^{(s)}))\simeq \cup_l\Omega_{i,m}^{s,l}$ where $\Omega_{i,m}^{s,l}=
(\overline{\Delta_{i,m}^{s,l}}\cap D(x_i^{(s)}))\times \mathbb{A}^{s(e-3)}.$ As a consequence  $\Omega_{i,m}^{s,l}$ is irreducible 
and we have that $D_{i,m}^{s,l}\subset \Omega_{i,m}^{s,l}.$ Moreover $$Codim(\Omega_{i,m}^{s,l},\mathbb{A}_m^{e})=
(e-3)(m+1)+(m+s+1)-s(e-3)=$$
$$(m+1)(e-2)-s(e-4)=Codim(C_{i,m}^{s,l},\mathbb{A}_m^{e});$$ hence  $C_{i,m}^{s,l}=\overline{\Omega_{i,m}^{s,l}}$ 
and the claim follows since $C_{i,m}^{s,l}\not=C_{i,m}^{s,l'}$ for $l\not=l'.$\\

(2) Assume $i=1,$ the case $i=e$ follows in the same way. We first check that 
$$ \pi_{m,2s-1}^{-1}(C_{i,2s-1}^{s,s}\cap D(x_{1}^{(s)}))=$$
$$\{\gamma \in \mathbb{A}^e_m, ord_{\gamma}(x_j)\geq s,~j=1,\ldots,e,~ord_{\gamma}(x_1)=s,$$
$$ord_{\gamma}E_{1j}\geq m+1 ~~\textnormal{for} ~3\leq j\leq e\} .$$
The inclusion $``\subset``$ is clear. To get the opposite inclusion we have to prove that the conditions just listed imply that 
$ord_{\gamma}E_{jh}\geq m+1$ for $2\leq j<h-1\leq e-1.$ This is an immediate consequence of the syzygie 
$$x_1E_{jh}-x_jE_{1h}+x_{j+1}^{c_{j+1}-2}\cdots x_{h-1}^{c_{h-1}-2}x_{h-1}E_{1,j+1}=0.$$
Therefore, $\pi_{m,2s-1}^{-1}(C_{i,2s-1}^{s,s}\cap D(x_{1}^{(s)}))$ is isomorphic to the product of $\mathbb{K}^*$ by an affine space of dimension 
$(m-s)+(m-s+1)+s(e-2)$ and its Zariski closure is irreducible of ccodimension $(m+1)(e-2)-s(e-4)$ in $\mathbb{A}^e_m.$

\end{dem}

\begin{lem}\label{id}
For $i=2,\ldots,e-2,$ we have that $$C_{i,m}^{s,s}=C_{i+1,m}^{s,m_{i+1}^s}.$$
 \end{lem}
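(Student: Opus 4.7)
The plan is to exhibit a common open subset $W$ of $D_{i,m}^{s,s}$ and $D_{i+1,m}^{s,m_{i+1}^s}$ that is dense in each, so that their Zariski closures coincide. The crucial input is the defining equation $E_{i,i+2} = x_i x_{i+2} - x_{i+1}^{c_{i+1}}$ of $S$, which couples the $t$-orders of $x_i$, $x_{i+1}$ and $x_{i+2}$ along any jet of $S$.

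First I would do an order analysis. Using $E_{i,i+2}\circ\gamma \equiv 0 \pmod{t^{m+1}}$ together with the identity $ord_{\gamma}(x_{i+1}^{c_{i+1}}) = \min(c_{i+1}s,\, m+1)$, every $\gamma \in D_{i,m}^{s,s}$ satisfies $ord_{\gamma}(x_{i+2}) \geq m_{i+1}^s$, and symmetrically every $\gamma \in D_{i+1,m}^{s,m_{i+1}^s}$ satisfies $ord_{\gamma}(x_i) \geq s$. The intersection $W := D_{i,m}^{s,s} \cap D_{i+1,m}^{s,m_{i+1}^s}$ is therefore an open subset of each: inside $D_{i,m}^{s,s}$ it is cut out by the open condition $x_{i+2}^{(m_{i+1}^s)} \neq 0$, and inside $D_{i+1,m}^{s,m_{i+1}^s}$ by $x_i^{(s)} \neq 0$.

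Next I would check $W \neq \emptyset$ by splitting on whether $c_{i+1}s \leq m+1$ or $c_{i+1}s > m+1$. In the first case, matching the coefficients of $t^{c_{i+1}s}$ in $E_{i,i+2}\circ\gamma$ already forces $x_i^{(s)} x_{i+2}^{(m_{i+1}^s)} = (x_{i+1}^{(s)})^{c_{i+1}} \neq 0$ on either $D$, so both extra conditions are automatic and in fact $D_{i,m}^{s,s} = D_{i+1,m}^{s,m_{i+1}^s}$ as sets. In the second case I would invoke the explicit parametrization from the proof of Proposition \ref{irr}: applied to $D_{i+1,m}^{s,m_{i+1}^s}$ with $l = m+1-s$, the constraint coming from $E_{i,i+2}$ only reads $x_i^{(\nu)} = 0$ for $\nu \leq m-l = s-1$, so $x_i^{(s)}$ lies in the free range of parameters, and symmetrically $x_{i+2}^{(m+1-s)}$ is a free parameter for $D_{i,m}^{s,s}$; choosing either of them nonzero exhibits a point of $W$.

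Finally, since by Proposition \ref{irr} both $D_{i,m}^{s,s}$ and $D_{i+1,m}^{s,m_{i+1}^s}$ are irreducible of the same codimension $se + (m-(2s-1))(e-2)$ in $\mathbb{A}_m^e$, the nonempty open subset $W$ is dense in each, and $C_{i,m}^{s,s} = \overline{W} = C_{i+1,m}^{s,m_{i+1}^s}$ follows. The main obstacle is the case $c_{i+1}s > m+1$: there neither $D$ is contained in the other as a set, and the conclusion really depends on combining the parametric structure from Proposition \ref{irr} with the coincidence of codimensions.
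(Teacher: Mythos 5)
Your argument is correct and, in the decisive case, takes a different route from the paper. Both proofs handle the case $c_{i+1}s<m+1$ identically: the equation $E_{i,i+2}=x_ix_{i+2}-x_{i+1}^{c_{i+1}}$ forces $ord_\gamma x_{i+2}=(c_{i+1}-1)s=m_{i+1}^s$ on $D_{i,m}^{s,s}$, giving an inclusion of contact loci (you observe it is in fact an equality of sets, which is a harmless strengthening). In the case $c_{i+1}s\geq m+1$, however, the paper argues abstractly: it places $D_{i,m}^{s,s}$ inside $\pi_{m,2s-1}^{-1}(C_{i+1,2s-1}^{s,s}\cap D(x_{i+1}^{(s)}))$, invokes Proposition \ref{gs} to know that the components of the closure are exactly the $C_{i+1,m}^{s,l}$, matches codimensions to conclude $C_{i,m}^{s,s}=C_{i+1,m}^{s,l}$ for \emph{some} $l$, and only then uses $E_{i,i+2}$ at a common point of the two dense opens to pin down $l=m+1-s$. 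You instead produce a common point of $D_{i,m}^{s,s}$ and $D_{i+1,m}^{s,m+1-s}$ directly from the explicit parametrization in the proof of Proposition \ref{irr} (the coefficient $x_{i+2}^{(m+1-s)}$ is a free parameter on $D_{i,m}^{s,s}$ because $m+1-s>m-s$), and then conclude from the fact that a nonempty open subset of an irreducible locally closed set is dense; note that for this last step irreducibility alone suffices, so the codimension comparison in your final paragraph is not actually needed. Your route is more self-contained (it bypasses Proposition \ref{gs} entirely), at the cost of leaning on the parametric description inside the proof of Proposition \ref{irr} rather than only on its statement. One small bookkeeping point: your case split puts $c_{i+1}s=m+1$ in the first case, but there the coefficient of $t^{c_{i+1}s}=t^{m+1}$ is not constrained modulo $t^{m+1}$, so the identity $x_i^{(s)}x_{i+2}^{(m_{i+1}^s)}=(x_{i+1}^{(s)})^{c_{i+1}}$ does not follow and the two contact loci need not coincide as sets; this boundary case should be moved to the second case, where your free-parameter argument applies verbatim since $m_{i+1}^s=m+1-s$ there as well.
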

\begin{dem}
If $m+1\leq c_{i+1}s,$ by definition $m^s_{i+1}=m+1-s,$ and in view of lemma \ref{eff} and lemma \ref{inc}, we have that 
$D_{i,m}^{s,s} \subset \pi_{m,2s-1}^{-1}(C_{i+1,2s-1}^{s,s}\cap D(x_{i+1}^{(s)})).$ Now by proposition \ref{gs}, the irreducible
components of  $\overline{\pi_{m,2s-1}^{-1}(C_{i+1,2s-1}^{s,s}\cap D(x_{i+1}^{(s)}))}$ are the $C_{i+1,m}^{s,l}$ for 
$l\in\{s,\ldots,m_{i+1}^s\}.$ Since $C_{i,m}^{s,s}=\overline{D_{i,m}^{s,s}}$ is irreducible, and its codimension in 
$\mathbb{A}^e_m$ coincides with the codimension of any of the $C_{i+1,m}^{s,l},$ there exists $l$ such that 
$C_{i,m}^{s,s}=C_{i+1,m}^{s,l}$ whith $ s\leq l \leq m+1-s.$ So $D^{s,s}_{i,m}$ and $D^{s,l}_{i+1,m}$ are dense open subsets 
of $C_{i,m}^{s,s}$ and there exists $\gamma \in D^{s,s}_{i,m} \cap D^{s,l}_{i+1,m}.$ We thus have 
$ord_{\gamma}x_i=ord_{\gamma}x_{i+1}=s,$ and $ord_{\gamma}x_{i+2}=l.$ But 
$E_{i,i+2}=x_ix_{i+2}-x_{i+1}^{c_{i+1}}$ and $ord_{\gamma}E_{i,i+2} \geq m+1.$
Since $m+1\leq c_{i+1}s,$ this implies $ord_{\gamma} x_{i+2}=l\geq m+1-s,$ so $l=m+1-s,$ i.e. $C_{i,m}^{s,s}=C_{i+1,m}^{s,m_{i+1}^s}.$ \\
Assume now that $m+1 > c_{i+1}s;$ for any $\gamma \in  D^{s,s}_{i,m},$ we have that $ord_{\gamma}x_i=ord_{\gamma} x_{i+1}=s$ and   
$ord_{\gamma} E_{i,i+2}\geq m+1,$ hence $ord_{\gamma} x_{i+2}=(c_{i+1}-1)s=m_{i+1}^s$ which implies 
that $D^{s,s}_{i,m}\subset D^{s,m_{i+1}^s}_{i+1,m}.$  Since both are irreducible and have the same dimension, we deduce by passing to the closure that  
$C_{i,m}^{s,s}=C_{i+1,m}^{s,m_{i+1}^s}.$ 
\end{dem}

Let $S_m^0:=\pi_{m}^{-1}(O)$, where $O$ is the singular point of $S.$ Note that $\overline{\pi_{m}^{-1}(S-\{0\})}$
is an irreducible component of $S_m$ of codimension $(m+1)(e-2)$ in $\mathbb{A}^e_m;$  we will see that the irreducible components of $S_m^0$ have codimension less than or equal to $(m+1)(e-2),$ therefore they are irreducible components of $S_m.$
\begin{prop}\label{cov}
   $$S_m^0=\bigcup_{i\in \{2,...,e-1\},s \in \{1,\ldots,\lceil\frac{m}{2}\rceil\},l \in \{s,\ldots,m_i^s\}}  C_{i,m}^{s,l}.$$
\end{prop}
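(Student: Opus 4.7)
The inclusion $\supset$ is immediate: for every $\gamma \in D_{i,m}^{s,l}$, Lemma \ref{inc} gives $ord_\gamma x_j \geq s \geq 1$ for every $j$, so $\gamma \in S_m^0$, and this property persists in the closure $C_{i,m}^{s,l}$.

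For the reverse inclusion, I take $\gamma \in S_m^0$, set $s_j := ord_\gamma x_j$ (with the convention $s_j = m+1$ when $x_j\circ\gamma \equiv 0 \pmod{t^{m+1}}$) and $s := \min(\min_j s_j,\ \lceil m/2 \rceil)$. Then $s_j \geq s$ for all $j$, so by Lemma \ref{eff} the truncation $\pi_{m,2s-1}(\gamma)$ lies in $V(I_{i,2s-1}^{s,s}) = C_{i,2s-1}^{s,s}$ for every admissible $i$. The argument now splits into three cases. In \emph{Case A}, some $i^* \in \{2,\ldots,e-1\}$ satisfies $s_{i^*} = s$; then $\gamma \in \pi_{m,2s-1}^{-1}(C_{i^*,2s-1}^{s,s}\cap D(x_{i^*}^{(s)}))$, and Proposition \ref{gs}(1) immediately places $\gamma$ in $\bigcup_l C_{i^*,m}^{s,l}$.

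In \emph{Case B}, $s = \min_j s_j \leq \lceil m/2 \rceil$ is attained only at $i=1$ (symmetrically at $i=e$). Then $\gamma \in \pi_{m,2s-1}^{-1}(C_{1,2s-1}^{s,s}\cap D(x_1^{(s)}))$, which by Proposition \ref{gs}(2) is irreducible of codimension $se+(m-2s+1)(e-2)$; call $Z$ its closure. I extend Lemma \ref{id} to $i=1$: any jet $\gamma_0$ with $s_1 = s_2 = s$ satisfies, via the relation $E_{1,3} = x_1 x_3 - x_2^{c_2}$, the identity $s_3 = m_2^s$; hence $D_{1,m}^{s,s}$ meets $D_{2,m}^{s,m_2^s}$, so $Z$ and $C_{2,m}^{s,m_2^s}$ are irreducible of the same codimension sharing a dense open locus, and thus coincide. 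Therefore $\gamma \in Z = C_{2,m}^{s,m_2^s}$.

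\emph{Case C} is the main obstacle: $\min_j s_j > \lceil m/2 \rceil$, so $\gamma$ fails to lie in any $D(x_i^{(s)})$. Set $s' = \lceil m/2 \rceil$; I still have $\gamma \in \pi_{m,2s'-1}^{-1}(C_{2,2s'-1}^{s',s'})\cap S_m$. The plan is to show this set equals $\bigcup_l C_{2,m}^{s',l}$ via a deformation argument: construct a one-parameter family $\gamma_\epsilon$ with $\gamma_0 = \gamma$ and $x_2^{(s')}(\gamma_\epsilon) = \epsilon$ for $\epsilon \neq 0$, propagating the perturbation to the other coordinates using the monomial structure of the $E_{ij}$. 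The coherent bookkeeping is subtle but tractable: since $2s' \geq m$, each relation $E_{ij}$ restricted to $\{x_k^{(b)} = 0 : b < s'\}$ has support only at the very top of the expansion mod $t^{m+1}$, so only one coefficient in each needs active adjustment. This places $\gamma_\epsilon$ in $D_{2,m}^{s',l}$ for some $l \in \{s',\ldots,m_2^{s'}\}$, and passing to the limit yields $\gamma \in C_{2,m}^{s',l}$, completing the proof.
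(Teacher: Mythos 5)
Your decomposition into the inclusion $\supset$ plus Cases A, B, C is a reasonable reorganization, and Cases A and B are essentially sound: they reproduce, in a cleaner order, the paper's use of Proposition \ref{gs}(1) for the open strata $D(x_{i}^{(s)})$ with $i$ interior, and of Proposition \ref{gs}(2) together with Proposition \ref{nv} for the boundary indices $i=1,e$. (In Case B your identification of $Z$ with the specific component $C_{2,m}^{s,m_2^s}$ is only justified when $c_2 s<m+1$ -- otherwise $E_{13}$ gives $s_3\geq m+1-s$, not equality -- but since any $l$ would do for the statement, this is harmless.)

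The genuine gap is Case C, and it is exactly the hard part of the proposition. First, note that for $m=2n+1$ odd your case is vacuous work: all $s_j\geq n+2$ implies $\gamma\in V(x_j^{(b)},\,b\leq n)=C_{i,2n+1}^{n+1,n+1}$ by Lemma \ref{eff}, so no deformation is needed. The real content is $m=2n$ even, where Case C asks whether the affine subspace $V:=V(x_j^{(b)},\,1\leq j\leq e,\,0\leq b\leq n)$, which has codimension $e(n+1)$, is contained in $\bigcup_{i,l}C_{i,2n}^{n,l}$, whose members have codimension $e(n+1)-2$. Your proposed deformation is not carried out, and its key claim -- that ``only one coefficient in each $E_{ij}$ needs active adjustment'' -- is false. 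Once you set $x_2^{(n)}=\epsilon$, the coefficient of $t^{2n}$ in each $E_{jk}$ imposes the binomial relation $x_j^{(n)}x_k^{(n)}=\delta_{jk}\,x_{j+1}^{(n)}x_{k-1}^{(n)}$ with $\delta_{jk}=1$ exactly when $c_{j+1}=\cdots=c_{k-1}=2$ and $\delta_{jk}=0$ otherwise; so the whole vector $(x_1^{(n)},\ldots,x_e^{(n)})$ must move along a determinate binomial variety whose structure depends on the pattern of the $c_i$. For instance if $c_{h-1}>2$ and $c_h=\cdots=c_{e-1}=2$, these relations force $x_h^{(n)}=\cdots=x_e^{(n)}=0$ on the relevant components, so a perturbation started at $x_2^{(n)}$ cannot be ``propagated'' freely to all coordinates; and when all $c_i=2$ the vector must move along a cone isomorphic to $S$ itself. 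Moreover the limit points land in components $C_{i,2n}^{n,l}$ for varying $i$ (not only $i=2$), so even the target of your Case C is misstated. This is precisely why the paper replaces the deformation by an induction on the embedding dimension $e$, with the base case $e=4$ settled by a count of generators (which, as the paper remarks, fails for $e\geq 5$) and the inductive step split according to the position $h$ of the last $c_i>2$. Without an argument of comparable substance for the even case, the proposal does not prove the proposition.
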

\begin{dem}
We first look at \textbf{the case m=2n+1}, $~n\geq 0.$ We claim that $$S_{2n+1}^0 =\bigcup_{i\in \{1,...,e\},s \in \{1,\ldots,n\}} \pi_{2n+1,2s-1}^{-1}(C_{i,2s-1}^{s,s}\cap D(x_{i}^{(s)}))\cup C_{i,2n+1}^{n+1,n+1}.~~~~~~~~(\diamond)$$
The proof of the claim is by induction on $n.$ 
By lemma \ref{eff}, we have that $S^0_1=C_{i,1}^{1,1}$ for any $i=1,...,e,$ hence the case $n=0.$
Using the inductive hypothesis for $n-1,$ and the fact that for  $s \in \{1,\ldots,n-1\}$ we have that $\pi_{2n-1,2s-1}\circ \pi_{2n+1,2n-1}=\pi_{2n+1,2s-1},$ we obtain:  
$$S_{2n+1}^0=\pi_{2n+1,2n-1}^{-1}(S_{2n-1}^0)=$$
$$\bigcup_{i\in \{1,...,e\},s \in \{1,\ldots,n-1\}} \pi_{2n+1,2s-1}^{-1}(C_{i,2s-1}^{s,s}\cap D(x_{i}^{(s)}))
\cup \pi_{2n+1,2n-1}^{-1}( C_{i,2n-1}^{n,n}).$$
The claim follows from the stratification 

$C_{i,2n-1}^{n,n}=\bigcup_{j=1,\cdots,e}(C_{i,2n-1}^{n,n}\cap D(x_{j}^{(n)}))\cup (C_{i,2n-1}^{n,n}\cap V(x_1^{(n)},
\cdots,x_e^{(n)})), $ \\
and from the fact that by lemma \ref{eff} $\pi_{2n+1,2n-1}^{-1}(C_{i,2n-1}^{n,n}\cap V(x_1^{(n)},
\cdots,x_e^{(n)})) =C_{i,2n+1}^{n+1,n+1}.$\\
We then conclude the proof of the proposition for $m=2n+1$ in two steps : First by using proposition \ref{gs} (2). Second, by proposition \ref{nv} we have that
 hence $Cont^s(x_1)\cap Cont^s(x_2)\not=\emptyset,$ 
that $\pi_{2n+1,2s-1}^{-1}(C_{i,2s-1}^{s,s}\cap D(x_{2}^{(s)})) \cap  \pi_{2n+1,2s-1}^{-1} (C_{i,2s-1}^{s,s}\cap D(x_{1}^{(s)}))\not=\emptyset~;$ 
since by \ref{gs} (1) this latter is irreducible, its generic point coincides with the generic point 
of one of the irreducible compnents of $\overline{\pi_{2n+1,2s-1}^{-1}(C_{i,2s-1}^{s,s}\cap D(x_{1}^{(s)}))}.$  \\

\textbf{The case m =2(n+1)}, $n\geq0$ : by ($\diamond$) we just need to prove that for $n\geq 0,$ and $i=1,\ldots,e$ we have that  
$$\pi_{2(n+1),2n+1}^{-1}(C_{i,2n+1}^{n+1,n+1})=
\cup_{\{i=2,\cdots,e-1~;~l=n+1,\cdots, (2(n+1))_i^{n+1}\}}C_{i,2(n+1)}^{n+1,l}.$$

First note that by lemma \ref{eff} and lemma \ref{inc} we have the inclusion
$$\pi_{2(n+1),2n+1}^{-1}(C_{i,2n+1}^{n+1,n+1}) \supset \cup_{\{i=2,\cdots,e-1~;~l=n+1,\cdots, (2(n+1))_i^{n+1}\}}C_{i,2(n+1)}^{n+1,l}.~~~(\diamond)$$

 The proof of the opposite inclusion is by induction on the embedding dimension $e$ of $S.$\\
 First assume that $e=4;$ the equations defining $S$ in $\mathbb{A}^4$ are $E_{13},E_{14},E_{24}.$ So the ideal defining 
$\pi_{2(n+1),2n+1}^{-1}(C_{i,2n+1}^{n+1,n+1})$ in $\mathbb{A}_{2(n+1)}^4$ is generated by
$$(x_j^{(0)},\ldots,x_j^{(n)},E^{(2n+2)}_{13},E^{(2n+2)}_{14},E^{(2n+2)}_{24};~j=1,\ldots,4),$$
hence every irreducible component of $\pi_{2(n+1),2n+1}^{-1}(C_{i,2n+1}^{n+1,n+1})$ has codimension  in $\mathbb{A}_{2(n+1)}^4$ less than or equal 
to $4(n+1)+3=4n+7.$\\
Now we have that $$\pi_{2(n+1),2n+1}^{-1}(C_{i,2n+1}^{n+1,n+1})=
\bigcup_{j=1,\ldots,4}\pi_{2(n+1),2n+1}^{-1}((C_{i,2n+1}^{n+1,n+1}\cap D(x_j^{(n+1)})))$$
$$\cup~~ \pi_{2(n+1),2n+1}^{-1}( (C_{i,2n+1}^{n+1,n+1}\cap V(x_1^{(n+1)},\ldots,x_4^{(n+1)})))$$
$$=\bigcup_{j=1,\ldots,4}\overline{\pi_{2(n+1),2n+1}^{-1}((C_{i,2n+1}^{n+1,n+1}\cap D(x_j^{(n+1)})))}$$ 
$$\cup ~~\pi_{2(n+1),2n+1}^{-1}( (C_{i,2n+1}^{n+1,n+1}\cap V(x_1^{(n+1)},
\ldots,x_4^{(n+1)}))).$$

Moreover since by proposition \ref{nv} we have that $Cont^{n+1}(x_1)\cap Cont^{n+1}(x_2)\not=\emptyset,$ we deduce that

  $$\pi_{2(n+1),2n+1}^{-1}(C_{i,2n+1}^{n+1,n+1}\cap D(x_{1}^{(n+1)})) \cap  \pi_{2(n+1),2n+1}^{-1} (C_{i,2n+1}^{n+1,n+1}
\cap D(x_2^{(n+1)})))\not=\emptyset.$$
By proposition \ref{gs} (2), $\overline{\pi_{2(n+1),2n+1}^{-1}(C_{i,2n+1}^{n+1,n+1}\cap D(x_{1}^{(n+1)}))}$  is irreducible, therefore it coincides 
with an irreducible components of $\overline{\pi_{2(n+1),2n+1}^{-1} (C_{i,2n+1}^{n+1,n+1}
\cap D(x_2^{(n+1)})))}.$\\ Similarly $\overline{\pi_{2(n+1),2n+1}^{-1}(C_{i,2n+1}^{n+1,n+1}\cap D(x_{4}^{(n+1)}))}$
coincides  with an irreducible components of $\overline{\pi_{2(n+1),2n+1}^{-1} (C_{i,2n+1}^{n+1,n+1}
\cap D(x_3^{(n+1)})))}.$ \\
In addition by lemma \ref{eff} and  proposition \ref{gs}. 1), we have that for $j=2,3,$ 
$$\overline{\pi_{2(n+1),2n+1}^{-1} (C_{i,2n+1}^{n+1,n+1}\cap D(x_j^{(n+1)}))}=
\bigcup_{l=1,\ldots,(2(n+1))_j^{n+1}}C_{j,2(n+1)}^{n+1,l}.$$ 
Hence $\pi_{2(n+1),2n+1}^{-1} (C_{i,2n+1}^{n+1,n+1})=$
$$\bigcup_{l=1,\ldots,(2(n+1))_j^{n+1};~j=2,3}C_{j,2n+1}^{n+1,l}~~\cup~~\pi_{2(n+1),2n+1}^{-1}( (C_{i,2n+1}^{n+1,n+1}\cap V(x_1^{(n+1)},
\ldots,x_4^{(n+1)}))).$$

Finally  we have that $\pi_{2(n+1),2n+1}^{-1}( (C_{i,2n+1}^{n+1,n+1}\cap V(x_1^{(n+1)},
\ldots,x_4^{(n+1)})))=$ $$\{\gamma \in S_{2(n+1)},~ord_\gamma x_j\geq n+2,~j=1,\ldots,4\}
=\{\gamma \in \mathbb{A}^4_{2(n+1)},~ord_\gamma x_j\geq n+2,~j=1,\ldots,4\}$$
$$=V(x_j^{(0)},\ldots,x_j^{(n+1)};~j=1,\ldots,4)$$
is irreducible of codimension $4(n+2)$ in $\mathbb{A}^4_{2(n+1)}$. Since $4(n+2)>4n+7,$ it is not an irreducible component of 
$\pi_{2(n+1),2n+1}^{-1}(C_{i,2n+1}^{n+1,n+1}),$ hence the claim.\\
We now assume the lemma to be true for toric surfaces $\tilde{S}$ of embedding dimension $\tilde{e}$ with $4\leq \tilde{e}
\leq e-1.$ We have that
$\pi_{2(n+1),2n+1}^{-1}(C_{i,2n+1}^{n+1,n+1})=$
$$\pi_{2(n+1),2n+1}^{-1}(C_{i,2n+1}^{n+1,n+1}\cap D(x_e^{(n+1)})) ~\cup~\pi_{2(n+1),2n+1}^{-1}(C_{i,2n+1}^{n+1,n+1}\cap V(x_e^{(n+1)}).$$
Again by proposition \ref{nv} and proposition \ref{gs}, $\overline{\pi_{2(n+1),2n+1}^{-1}(C_{i,2n+1}^{n+1,n+1}\cap D(x_e^{(n+1)}))}$ 
coincides with one of the irreducible components of $\overline{\pi_{2(n+1),2n+1}^{-1}(C_{i,2n+1}^{n+1,n+1}\cap D(x_{e-1}^{(n+1)}))},$
namely the $C_{e-1,2(n+1)}^{n+1,l}$ for $l\in \{n+1,\ldots,(2(n+1))^{n+1}_{e-1}\}.$

So it remains to determine $\pi_{2(n+1),2n+1}^{-1}(C_{i,2n+1}^{n+1,n+1}\cap V(x_e^{(n+1)}).$ The discussion splits into two cases:\\
\textbf{i)} There exists $h\in \{3,\ldots,e\}$ such that $c_{h-1}>2$ and $c_h=\cdots=c_{e-1}=2.$\\
By lemma \ref{eff}, we have that $\pi_{2(n+1),2n+1}^{-1}(C_{i,2n+1}^{n+1,n+1}\cap V(x_e^{(n+1)})=$
$$\{\gamma \in S_{(2n+1)};~ord_\gamma x_j\geq n+1,~1\leq j \leq e-1,~ord_\gamma x_e\geq n+2\}=$$
$$\{\gamma \in \mathbb{A}^e_{(2n+1)};~ord_\gamma x_j\geq n+1,~1\leq j \leq e-1,~ord_\gamma x_e\geq n+2, $$
$$ord_\gamma E_{jk}\geq 2n+3,~1\leq j<k-1\leq e-1\}.$$
Now recall that $E_{e-2,e}=x_{e-2}x_e-x_{e-1}^{c_{e-1}}.$ If $h<e,$ we have that $c_{e-1}=2,$ so for $\gamma \in \mathbb{A}^e_{2(n+1)}$ such that
$ord_\gamma x_{e-2}\geq n+1,ord_\gamma x_{e}\geq n+2$ and $ord_\gamma E_{e-2,e}\geq 2n+3,$ we thus have that $2ord_\gamma x_{e-1}\geq 2n+3$
hence $ord_\gamma x_{e-1}\geq n+2.$ Similarly, if $i\geq h,$ for $\gamma \in \mathbb{A}^e_{2(n+1)}$
such that
$ord_\gamma x_{i-1}\geq n+1,ord_\gamma x_{i+1}\geq n+2$ and $ord_\gamma E_{i-1,i+1}\geq 2n+3,$ we get that $ord_\gamma x_{i}\geq n+2.$
By descending induction on $i,$ this shows that 
$$\pi_{2(n+1),2n+1}^{-1}(C_{i,2n+1}^{n+1,n+1}\cap V(x_e^{(n+1)})\subset V(x_h^{(n+1)},\ldots,x_e^{(n+1)}).$$
Note that this inclusion is verified by definition when $h=e.$ Moreover,
for $\gamma \in  \mathbb{A}^e_{2(n+1)}$ 
such that $ord_\gamma x_j\geq n+1 (\textnormal{resp.}~n+2)$ for $1\leq j<h(\textnormal{resp.} ~h\leq j \leq e),$ we have that 
$ord_\gamma E_{jk}\geq 2n+3$ if $h\leq k\leq e,$ indeed we have that
$$ord_\gamma x_jx_k\geq n+1 +n+2=2n+3,~~~\textnormal{and}$$
$$ ord_\gamma x_{j+1}x_{j+1}^{c_{j+1}-2}\ldots x^{c_{k-1}-2}x_{k-1}\geq 3(n+1)~(\textnormal{resp.}~n+1+n+2)$$
for $k=h(\textnormal{resp.}~k>h).$ Therefore we have that
$$\pi_{2(n+1),2n+1}^{-1}(C_{i,2n+1}^{n+1,n+1}\cap V(x_e^{(n+1)})=\{\gamma \in  \mathbb{A}^e_{2(n+1)};
~ord_\gamma x_j\geq n+1,$$
$$ 1\leq j\leq h-1,
ord_\gamma x_j\geq n+2,h\leq j \leq e,$$
$$ord_\gamma E_{jk}\geq 2n+3, 1\leq j<k-1\leq h-2\}.~~~~(\diamond\diamond)$$ 
If $h\geq 5,$ this can be interpreted geometrically as follows: Let $\tilde{S}$ be the toric surface in $\mathbb{A}^{h-1}=
\textnormal{Spec}\mathbb[x_1,\ldots,x_{h-1}]$ defined by the ideal generated by $(E_{jk}, 1\leq j<k-1\leq h-2)$ and for
$i=2,\ldots,h-2,$ $m\in \mathbb{N}, s\in\{1,\ldots,\lceil\frac{m}{2}\rceil\},l\in\{s,\ldots,m_i^s\}$ let 
$$\tilde{D}^{s,l}_{i,m}=\{\gamma \in \tilde{S}_m;~ord_\gamma x_i=s,ord_\gamma x_{i+1}=s\}$$
and  $\tilde{C}^{s,l}_{i,m}=\overline{\tilde{D}^{s,l}_{i,m}};$ finally for $m>p,$ let $\tilde{\pi}_{m,p}:\tilde{S}_m\longrightarrow \tilde{S}_p$ be the 
canonical projection. By lemma \ref{eff} again, we have that 
$$\tilde{\pi}_{2(n+1),2n+1}^{-1}(\tilde{C}_{i,2n+1}^{n+1,n+1})=
\{\gamma \in  \mathbb{A}^{h-1}_{2(n+1)};~ord_\gamma x_j\geq n+1,1\leq j\leq h-1,$$
$$ord_{\gamma}E_{jk}\geq 2n+3, 1\leq j<k-1\leq h-2\}.$$

Therefore we deduce that $\pi_{2(n+1),2n+1}^{-1}(C_{i,2n+1}^{n+1,n+1}\cap V(x_e^{(n+1)})=$
$$\tilde{\pi}_{2(n+1),2n+1}^{-1}(\tilde{C}_{i,2n+1}^{n+1,n+1})\times
\textnormal{Spec}\mathbb{K}[x^{(n+2)}_j,\ldots,x_j^{(2(n+1))},j=h,\ldots,e],$$
which by the inductive hypothesis equal to 
$$\bigcup_{i=2,\ldots,h-2;~l=n+1,\ldots,(2(n+1))_i^{n+1}} 
\tilde{C}_{i,2(n+1)}^{n+1,l}\times \textnormal{Spec}\mathbb{K}[x^{(n+2)}_j,\ldots,x_j^{(2(n+1))},j=h,\ldots,e].$$
Newt we claim that 
$$\bigcup_{i=2,\ldots,h-2;~l=n+1,\ldots,(2(n+1))_i^{n+1}} 
C_{i,2(n+1)}^{n+1,l}\subset V(x_h^{(n+1)},\ldots,x_e^{(n+1)}).$$
Indeed, let $\gamma \in D_{i,2(n+1)}^{n+1,l}$ for some $i$ and $l$ in the above union. We have that $ord_\gamma x_j\geq n+1$ for
$1\leq j\leq e,$ $ord_\gamma x_i=n+1$ and $ord_\gamma E_{ie}\geq 2n+3.$ Since $i\leq h-2$ and $c_{h-1}>2,$ this implies that   
$$ord_\gamma x_{i+1}x_{i+1}^{c_{i+1}-2}\ldots x^{c_{e-1}-2}x_{e-1}\geq 2n+3,$$
therefore  $ord_\gamma x_ix_e\geq 2n+3,$ thus $ord_\gamma x_e\geq n+2,$ and
$\gamma \in \pi_{2(n+1),2n+1}^{-1}(C_{i,2n+1}^{n+1,n+1})$ and since we have proved that 
$$\pi_{2(n+1),2n+1}^{-1}(C_{i,2n+1}^{n+1,n+1}\cap V(x_e^{(n+1)})\subset  V(x_h^{(n+1)},\ldots,x_e^{(n+1)}),$$
we deduce that  $C_{i,2(n+1)}^{n+1,l}=\overline{ D_{i,2(n+1)}^{n+1,l}}\subset  V(x_h^{(n+1)},\ldots,x_e^{(n+1)}).$
Finally by proposition \ref{irr}, $C_{i,2(n+1)}^{n+1,l}(\textnormal{resp.}~\tilde{C}_{i,2(n+1)}^{n+1,l})$ is irreducible 
of codimension $(n+1)e+e-2 (\textnormal{resp.}~(n+1)(h-1)+h-3)$ in $\mathbb{A}_{2(n+1)}^e(\textnormal{resp.}~\mathbb{A}_{2(n+1)}^{h-1}),$
therefore $$\dim~ C_{i,2(n+1)}^{n+1,l}=\dim~ \tilde{C}_{i',2(n+1)}^{n+1,l'}\times \textnormal{Spec}\mathbb{K}[x^{(n+2)}_j,\ldots,x_j^{(2(n+1))},j=h,\ldots,e]$$
for any $i'\in\{2,\ldots h-2\},l'\in\{n+1,\ldots,(2(n+1))^{n+1}_{i'}\},$
and we deduce from the first inclusion $(\diamond)$ that $C_{i,2(n+1)}^{n+1,l}$ coincides with 
$\tilde{C}_{i',2(n+1)}^{n+1,l'}\times \textnormal{Spec}\mathbb{K}[x^{(n+2)}_j,\ldots,x_j^{(2(n+1))},j=h,\ldots,e]$ where 
$i'\in\{2,\ldots h-2\},l'\in \{n+1,\ldots,(2(n+1))^{n+1}_{i'}\}.$\\
But we have that $ord_\gamma x_i=n+1,$ $ord_\gamma (x_{i+1})=l$ for $\gamma$ the generic point of $C_{i,2(n+1)}^{n+1,l},$ therefore since 
$i+1\leq h-1,$ we have that $ord_{\tilde{\gamma}}x_i=n+1$ and $ord_{\tilde{\gamma}}x_{i+1}=l$ for   $\tilde{\gamma}$ the generic point
of $\tilde{C}_{i',2(n+1)}^{n+1,l'}.$ Therefore $\tilde{\gamma}\in \tilde{C}_{i,2(n+1)}^{n+1,l}$
and we deduce that $\tilde{C}_{i',2(n+1)}^{n+1,l'}\subset\tilde{C}_{i,2(n+1)}^{n+1,l}.$ But since they are irreducible of the same codimension 
they are equal, so we have that  
$$C_{i,2(n+1)}^{n+1,l}=\tilde{C}_{i,2(n+1)}^{n+1,l}\times \textnormal{Spec}\mathbb{K}[x^{(n+2)}_j,\ldots,x_j^{(2(n+1))},j=h,\ldots,e].$$
 We thus have that 
$$\pi_{2(n+1),2n+1}^{-1} (C_{i,2n+1}^{n+1,n+1}\cap V(x_e^{(n+1)}))=\bigcup_{i=2,\ldots,h-2; l=n+1,\ldots,(2(n+1))_i^{n+1}} C_{i,2(n+1)}^{n+1,l},$$
and the claim follows.(Note that we get that
$$\bigcup_{i=2,\ldots,h-2,e-1,l=n+1;~\ldots,(2(n+1))_i^{n+1}} C_{i,2(n+1)}^{n+1,l}=
\bigcup_{i=2,\ldots,e-1,l=n+1;~\ldots,(2(n+1))_i^{n+1}} C_{i,2(n+1)}^{n+1,l}$$
as an immediate consequence of lemma \ref{eff} and lemma \ref{id}.)\\
If $h=4,$ let $\tilde{S}$ be the toric surface in $\mathbb{A}^3=\textnormal{Spec}\field{K}[x_1,x_2,x_3]$ defined by the ideal $(E_{1,3})$ and let 
$\tilde{C}_{i,2(n+1)}^{n+1}=\{\gamma \in \tilde{S}_{2(n+1)};~ord_\gamma x_j\geq n+1,~j=1,2,3\}.$ The equality $(\diamond\diamond)$ reduces to 
$$\pi_{2(n+1),2n+1}^{-1} (C_{i,2n+1}^{n+1,n+1}\cap V(x_e^{(n+1)}))= \tilde{C}_{i,2(n+1)}^{n+1}\times
 \textnormal{Spec}\mathbb{K}[x^{(n+2)}_j,\ldots,x_j^{(2(n+1))},j=4,\ldots,e].$$
Since $E_{13}=x_1x_3-x_2^{c_2},$ if $c_2>2,$ $\tilde{C}_{i,2(n+1)}^{n+1}\subset
\textnormal{Spec}\mathbb{K}[x^{(n+2)}_j,\ldots,x_j^{(2(n+1))},j=1,\ldots,3]$
is defined by the ideal $(x_1^{(n+1)}x_3^{(n+1)}),$ so $\tilde{C}_{i,2(n+1)}^{n+1}=V(x_1^{(n+1)})\cup V(x_3^{(n+1)})$ while it is irreducible 
if $c_2=2.$ \\
We check as above that 

$$\bigcup_{~l=n+1,\ldots,(2(n+1))_2^{n+1}} 
C_{2,2(n+1)}^{n+1,l}\subset V(x_4^{(n+1)},\ldots,x_e^{(n+1)})$$
and that $\dim  C_{2,2(n+1)}^{n+1,l}$ coincides with the dimension of any irreducible components 
of $\tilde{C}_{i,2(n+1)}^{n+1}\times
 \textnormal{Spec}\mathbb{K}[x^{(n+2)}_j,\ldots,x_j^{(2(n+1))},j=4,\ldots,e].$ Again in view of $(\diamond),$ 
each  $C_{2,2(n+1)}^{n+1,l}$ is an irreducible component of $\tilde{C}_{i,2(n+1)}^{n+1}\times
 \textnormal{Spec}\mathbb{K}[x^{(n+2)}_j,\ldots,x_j^{(2(n+1))},j=4,\ldots,e].$\\
If $c_2=2,$ then $(2(n+1))_2^{n+1}=n+1$ and we thus have 
$$\pi_{2(n+1),2n+1}^{-1} (C_{i,2n+1}^{n+1,n+1}\cap V(x_e^{(n+1)}))=C_{2,2(n+1)}^{n+1,n+1}.$$

If $c_2>2,$ we have that $(2(n+1))_2^{n+1}=n+2,$ and the same argument as above shows that 
$$C_{2,2(n+1)}^{n+1,n+1}=
 V(x_1^{(n+1)}\times\textnormal{Spec}\mathbb{K}[x^{(n+2)}_j,\ldots,x_j^{(2(n+1))},j=4,\ldots,e]$$
$$C_{2,2(n+1)}^{n+1,n+2}=
 V(x_3^{(n+1)}\times\textnormal{Spec}\mathbb{K}[x^{(n+2)}_j,\ldots,x_j^{(2(n+1))},j=4,\ldots,e].$$
We thus have 
$$\pi_{2(n+1),2n+1}^{-1} (C_{i,2n+1}^{n+1,n+1}\cap V(x_e^{(n+1)}))=\bigcup_{l=n+1;~\ldots,(2(n+1))_i^{n+1}} C_{2,2(n+1)}^{n+1,l}$$
hence the claim.
\\
Finally if $h=3,$ by $(\diamond\diamond)$ we have that 
$\pi_{2(n+1),2n+1}^{-1} (C_{i,2n+1}^{n+1,n+1}\cap V(x_e^{(n+1)}))=$
$$\textnormal{Spec}\mathbb{K}[x^{(n+1)}_j,\ldots,x_j^{(2(n+1))},j=1,2]\times\textnormal{Spec}\mathbb{K}[x^{(n+2)}_j,\ldots,x_j^{(2(n+1))},j=3,\ldots,e].$$
Now we have that $C_{2,2(n+1)}^{n+1,n+1}\subset V(x_3^{(n+1)},\ldots,x_e^{(n+1)}).$
Indeed, for $\gamma \in  D_{i,2n+1}^{n+1,n+2},$ we have that $ord_\gamma x_2=n+1,ord_\gamma x_3=n+2,$ 
$ord_\gamma x_j\geq n+1,j=4,\ldots,e$ and  $ord_\gamma E_{2j}\geq 2n+3$ for $j=4,\ldots,e.$
Since $c_3=\ldots=c_{e-1}=2,$ this implies that $ord_\gamma x_j\geq n+2$ for $j=4,\ldots,e,$
so $\gamma \in V(x_3^{(n+1)},\ldots,x_e^{(n+1)}).$ We conclude that 
$\pi_{2(n+1),2n+1}^{-1} (C_{i,2n+1}^{n+1,n+1}\cap V(x_e^{(n+1)}))=C_{2,2(n+1)}^{n+1,n+2}$
because both sets are irreducible and have the same dimension, and the claim follows in this case.\\

\textbf{ii)}
If $c_2=\cdots=c_{e-1}=2$ then  
$$\pi_{2(n+1),2n+1}^{-1}(C_{i,2n+1}^{n+1,n+1})=V(x_i^{(0)},\ldots,x_i^{(n)},~i=1,\ldots,n,$$  
$$x^{(n+1)}_ix_j^{(n+1)}-x^{(n+1)}_{i-1}x_{j-1}^{(n+1)},1\leq i <j-1\leq e-1).$$
The ideal generated by $(x^{(n+1)}_ix_j^{(n+1)}-x^{(n+1)}_{i-1}x_{j-1}^{(n+1)},~1\leq i <j-1\leq e-1),$ 
is isomorphic to the ideal defining $S$ in $\mathbb{A}^e,$ hence it is prime and $\pi_{2(n+1),2n+1}^{-1}(C_{i,2n+1}^{n+1,n+1})$ is irreducible.
Since by proposition \ref{nv} we have that  $$\pi_{2(n+1),2n+1}^{-1}(C_{i,2n+1}^{n+1,n+1}\cap D(x_{e-1}^{(n+1)} \cap D(x_e^{(n+1)})\not=\emptyset,$$
then it is dense  $\pi_{2(n+1),2n+1}^{-1}(C_{i,2n+1}^{n+1,n+1}),$ and we deduce that 
$$\pi_{2(n+1),2n+1}^{-1}(C_{i,2n+1}^{n+1,n+1})=C_{e-1,2(n+1)}^{n+1,n+1},$$
thus the proposition in this case.\\

\end{dem}  
\begin{rem}Note that the argument that we use in the proposition \ref{cov} for $e=4$ does not work in general. The argument works in the case  $e=4$ because the number of equations that define $S \subset\mathbb{A}^e$ (this number is ${2 \choose e-1})$ is less or equal to $e$ if   and only if
$e\leq 4.$  
 
\end{rem}

\begin{theo} \label{th}
Let $m\in  \mathbb{N}, ~m\geq 1.$
Modulo the identifications 
$C_{i,m}^{s,s}=C_{i+1,m}^{s,m_{i+1}^s},$  the irreducible components of $S_m^0:=\pi_m^{-1}(0)$ are the $C_{i,m}^{s,l},~i=2, \cdots, e-1,$
$ s \in \{1,\ldots,\lceil\frac{m}{2}\rceil\} $ and
$ l \in \{s,\ldots,m_i^s \}\}.$ 
\end{theo}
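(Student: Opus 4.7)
The proof synthesises the three preceding results. Proposition \ref{cov} provides the covering $S_m^0 = \bigcup_{i,s,l} C_{i,m}^{s,l}$ over the stated indexing set; Proposition \ref{irr} shows each $C_{i,m}^{s,l}$ is irreducible of codimension $se + (m - (2s-1))(e-2)$ in $\mathbb{A}^e_m$; and Lemma \ref{id} gives the identifications $C_{i,m}^{s,s} = C_{i+1,m}^{s, m_{i+1}^s}$ for $i = 2, \ldots, e-2$. Since the irreducible components of $S_m^0$ are the maximal elements under inclusion among the irreducible closed sets in any such finite covering, the theorem reduces to verifying that, modulo the identifications of Lemma \ref{id}, no $C_{i,m}^{s,l}$ is properly contained in another $C_{i',m}^{s',l'}$.

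First I would reduce to the case $s = s'$. Suppose $C_{i,m}^{s,l} \subseteq C_{i',m}^{s',l'}$. By Lemma \ref{inc} combined with Lemma \ref{eff}, the latter is contained in $V(x_j^{(b)} : 1 \leq j \leq e,\ 0 \leq b < s')$, so every arc in it satisfies $ord_\gamma x_j \geq s'$ for all $j$. Taking the generic point of $C_{i,m}^{s,l}$, where $ord_\gamma x_i = s$, this forces $s \geq s'$. Rewriting the codimension as $s(4-e) + (m+1)(e-2)$, one sees it is strictly decreasing in $s$ when $e > 4$, so $s > s'$ would yield $\dim C_{i,m}^{s,l} > \dim C_{i',m}^{s',l'}$, contradicting the inclusion. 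For $e = 4$ the codimensions coincide so the inclusion is already an equality, and applying the same generic-point argument to the reverse inclusion gives $s' \geq s$, again forcing $s = s'$.

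With $s = s'$ both components have equal codimension, so any inclusion is an equality, and it suffices to compare the generic orders $ord_\gamma x_j$ at the common generic point. These are determined explicitly in the proof of Proposition \ref{irr} by $s, l, i$, and the $c_k$'s. If $i = i'$ then $ord_\gamma x_{i+1}$ is simultaneously $l$ and $l'$, hence $l = l'$. If $i' = i+1$ then $ord_\gamma x_{i+1} = s$ forces $l = s$, and applying $E_{i,i+2}$ at $\gamma$ forces $ord_\gamma x_{i+2} = l' = m_{i+1}^s$, which is precisely the identification of Lemma \ref{id}. If $i' > i+1$, an iteration of the same argument on consecutive indices shows the coincidence decomposes into successive identifications of the form $(i,s) \sim (i+1, m_{i+1}^s)$, each of which can occur only when $m_{k+1}^s = s$. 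Hence every coincidence arises from Lemma \ref{id}. The main obstacle is the careful bookkeeping in this last step, where one must verify that the explicit generic orders computed in Proposition \ref{irr} match only in the manner described by Lemma \ref{id}; the reductions via codimension and Lemma \ref{eff} handle the cross-$s$ inclusions cleanly, so the real work is concentrated in the same-$s$ comparison.
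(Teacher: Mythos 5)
Your proposal is correct and takes essentially the same route as the paper: the covering from Proposition \ref{cov}, followed by ruling out inclusions among the $C_{i,m}^{s,l}$ by combining the order conditions at generic points with the codimension formula of Proposition \ref{irr}, with the only coincidences being those of Lemma \ref{id}. The paper phrases the same-$s$ comparison in terms of coordinate hyperplanes containing one component but not the other (via Proposition \ref{gs}), which is the same idea as your comparison of generic orders, and it handles the cross-$s$ cases exactly as you do, via $V(x_i^{(s)})$ versus $D(x_i^{(s)})$ together with the monotonicity of the codimension in $s$.
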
 
\begin{dem}
By proposition \ref{cov}, $S_m^{(0)}$ is covered by the $C_{i,m}^{s,l}.$ But apart from the identifications above, $C_{i,m}^{s,l}\not \subset C_{i',m}^{s,l'},$
because by proposition \ref{gs}, there exist hyperplane coodinates that contain the one but not the other, and by proposition \ref{irr} they have the same dimension. On the other hand $C_{i,m}^{s,l}\not \subset C_{i',m}^{s',l'},$ if $s <s',$ because by proposition \ref{gs} the $C_{i,m}^{s,l}$ has non-empty intersection with  $D(x_i^{(s)}),$ but $C_{i',m}^{s',l'}\subset V(x_i^{(s)})$. Finally, $C_{i',m}^{s',l'} \not \subset C_{i,m}^{s,l},$ because by proposition \ref{irr} the codimension of the first one is less than or equal to the codimension of the second one, and the theorem follows.
\end{dem}

\begin{rem}\label{nnv}
Given Theorem \ref{th}, proposition \ref{nv} means that there are no vanishing components.
 \end{rem}
We obtain a graph $\Gamma$ by representing every iireducible components of $S_m^0,m\geq 1,$ by a vertex $v_{i,m}$ and by joining the vertices 
$v_{i_1,m+1}$ and $v_{i_0,m}$ if the morhphism $\pi_{m+1,m}$ induces a morphism between the corresponding irreducible components.
To every $i=2,\ldots,i-1,$  $\Gamma$ contains a subgraph $\Gamma_i;$ the identifications  $C_{i,m}^{s,s}=C_{i+1,m}^{s,m_{i+1}^s},$
are translated by an identification between infinite lines of $\Gamma_i$ and $\Gamma_{i+1}$ indexed by the index of speciality $s.$ In the figure below,
these lines are the broken lines having the same color.

\includegraphics[width=160mm,height= 160mm]{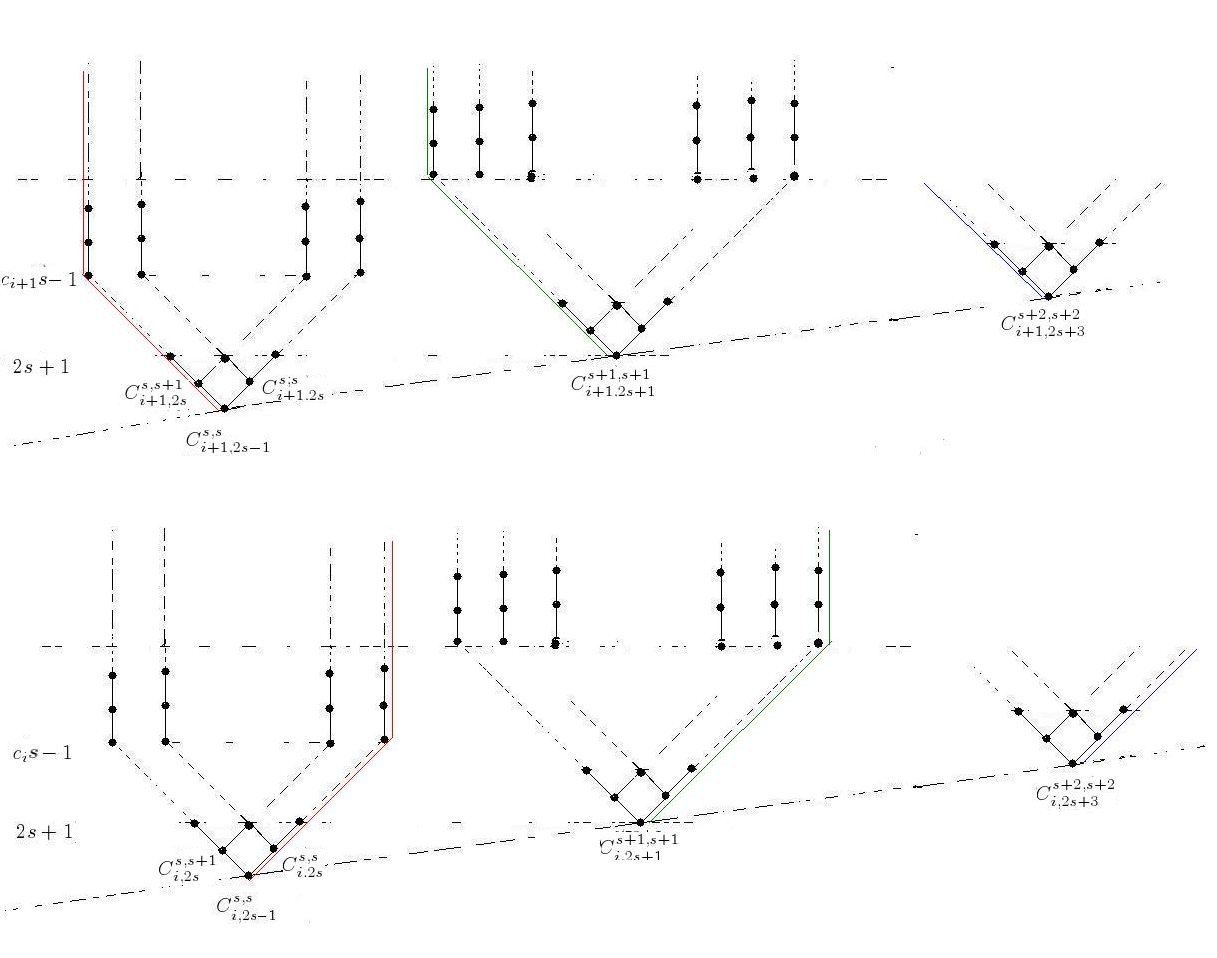}
\begin{defi}\label{sp}
 Let $m \in \IN,~m\geq 1,$ and let $C$ be an irreducible component of $S_m^0.$ By Theorem \ref{th}, there exist  
$ s \in \{1,\ldots,\lceil\frac{m}{2}\rceil\}, $ 
$ l \in \{s,\ldots,m_i^s \} $ and  $i \in \{ 2,\cdots, e-1\}$ such that $C=C_{i,m}^{s,l}.$
We say that $C$ has index of speciality $s.$ Note that $s=ord_{\gamma}(M):=\textnormal{min}_{f\in M}\{ord_{\gamma}(f)\}$ where $M$ 
is the maximal ideal of the local ring $O_{S,0}$ and $\gamma$ the generic point of $C.$ 
\end{defi}
For $a,b\in \mathbb{N},~b\not=0,$ we denote by
$[\frac{a}{b}]$ the integral part of $\frac{a}{b}.$ 
 For $c,m \in \mathbb{N}$, let $q_c=[\frac{m}{c}].$ We set
$$N_c^{s}(m):=(sc-(2s-1)), for~~s=1,...,q_c~;$$ 
$$N_c^{s}(m):=m-(2s-2),for~~s=q_c+1,...,\lceil\frac{m}{2}\rceil.$$
 For $m\in \mathbb{N},~m\geq 1$, we call $N(m)$ the number of irreducible component of $S_m^0.$ Then counting the irreducible components in the Theorem \ref{th} we find
\begin{cor}\label{nb}
 If all the $c_i$ are equal to $2,$ then $N(m)=\lceil\frac{m}{2}\rceil.$ Otherwise let $c_{i_1},...,c_{i_h}$ be the elements
 in $\{c_2,\ldots,c_{e-2}\}$ different from $2,$ 
then we have $$N(m)=\sum_{s=1}^{\lceil\frac{m}{2}\rceil}(N^s_{c_{i_1}}(m)+(N^s_{c_{i_2}}(m)-1)+\ldots+(N^s_{c_{i_h}}(m)-1)).$$
\end{cor}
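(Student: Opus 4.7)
The plan is to invoke Theorem~\ref{th} to list the irreducible components of $S_m^0$, partition them by the index of speciality $s$, count those with each fixed $s \in \{1, \ldots, \lceil m/2 \rceil\}$, and sum.

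First, for each fixed $i \in \{2, \ldots, e-1\}$ and $s$, I would count the admissible values $l \in \{s, \ldots, m_i^s\}$, which gives $m_i^s - s + 1$ components of the form $C_{i,m}^{s,l}$ before identifications. The definition $m_i^s = \min\{(c_i - 1)s,\, m + 1 - s\}$ naturally splits into two cases. When $c_i s \leq m$, i.e.\ $s \leq q_{c_i} = [m/c_i]$, one has $(c_i - 1)s \leq m - s < m + 1 - s$, so $m_i^s = (c_i-1)s$ and the count equals $(c_i - 2)s + 1 = c_i s - (2s - 1) = N^s_{c_i}(m)$. When $c_i s > m$, one has $m_i^s = m + 1 - s$ and the count equals $m - (2s - 2) = N^s_{c_i}(m)$; the boundary case $c_i s = m + 1$ is consistent with either formula.

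Next, I would incorporate the identifications. By Lemma~\ref{id} there is exactly one identification $C_{i,m}^{s,s} = C_{i+1,m}^{s,m_{i+1}^s}$ for each $i \in \{2, \ldots, e-2\}$. These $e-3$ identifications are indexed by distinct values of $i$ and form a chain of consecutive gluings; since Theorem~\ref{th} guarantees no further coincidences among the $C_{i,m}^{s,l}$, they reduce the total count at fixed $s$ by exactly $e-3$. Thus the number of components of index of speciality $s$ equals $\sum_{i=2}^{e-1} N^s_{c_i}(m) - (e-3)$.

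Finally, I would regroup. If all $c_i = 2$, each $N^s_{c_i}(m) = 1$, the count at each $s$ reduces to $(e - 2) - (e - 3) = 1$, and summing gives $N(m) = \lceil m/2 \rceil$. Otherwise, isolating the $c_i = 2$ summands (each equal to $1$, totalling $e - 2 - h$) from the $c_{i_j} \neq 2$ summands yields the stated formula after summing over $s$. The main risk is bookkeeping: one must verify that the $e - 3$ identifications are genuinely non-redundant, which is immediate from the fact that they are labelled by distinct indices $i$, and that the two-case definition of $m_i^s$ meshes smoothly with the two-case definition of $N^s_{c_i}(m)$.
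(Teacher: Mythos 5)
Your proposal is correct and is exactly the counting the paper intends: the corollary is stated as a direct count of the components listed in Theorem~\ref{th}, with $m_i^s-s+1=N^s_{c_i}(m)$ per pair $(i,s)$ and the $e-3$ chain identifications of Lemma~\ref{id} subtracted for each $s$. Note only that your (correct) derivation ranges over all of $c_2,\ldots,c_{e-1}$, so the paper's restriction to $\{c_2,\ldots,c_{e-2}\}$ in the statement appears to be a typo for $\{c_2,\ldots,c_{e-1}\}$.
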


\begin{cor} \label{Ni}
Let $S$ be a toric surface. The number of irreducible components of $S_m^0$ and their dimensions determine the set 
$\{ c_t,t=2,\ldots,e-2\}.$
\end{cor}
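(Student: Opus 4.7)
The plan has two main steps. By Proposition~\ref{irr}, every irreducible component $C^{s,l}_{i,m}$ of $S_m^0$ has dimension $2(m+1)+s(e-4)$, a function only of $e,m,s$. The first step will use this dimension formula to recover $e$ and the stratification of the components by their index of speciality; the second will invert a simple linear formula for $N_1(m)$ as $m$ varies to extract the multiset $\{c_t\}$.

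For $e\geq 5$, the map $s\mapsto 2(m+1)+s(e-4)$ is strictly monotone in $s$, so the multiset of dimensions of the components (together with $m$) determines $e$ and the index of speciality of each component. In particular, I read off $N_s(m)$, the number of components with index of speciality $s$, for every $s\in\{1,\ldots,\lceil m/2\rceil\}$.

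Next, counting the components listed in Theorem~\ref{th} at $s=1$, and subtracting the $e-3$ identifications between consecutive blocks, yields
$$N_1(m)=1-h+\sum_{k=1}^{h}\min(c_{i_k}-1,\,m),$$
where $c_{i_1},\ldots,c_{i_h}$ are the $c_t$'s different from $2$ and $r=(e-2)-h$ is the number of $c_t$'s equal to $2$. Taking differences,
$$N_1(m+1)-N_1(m)=\#\{k\mid c_{i_k}>m+1\},\qquad m\geq 1;$$
the right-hand side is non-increasing in $m$, and its drop from $m$ to $m+1$ equals the number of $c_{i_k}$'s equal to $m+2$. Letting $m$ vary therefore recovers the entire multiset of non-$2$ values, and combined with $r$ this determines the full multiset $\{c_t:t=2,\ldots,e-1\}$.

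For $e=3$ (the $A_n$ case) Theorem~\ref{lc} already gives $N(m)=c_2-1$ for $m\geq c_2$, which determines $c_2$ directly. The main obstacle is the case $e=4$: there all components share the common dimension $2(m+1)$, so the index of speciality cannot be read off from the dimensions and one only has access to $N(m)$. I would then distinguish the three sub-cases $h\in\{0,1,2\}$ from $N(2)$ (which equals $h+1$ in this range) and pin down the pair $\{c_2,c_3\}$ by a more careful analysis of $N(m)$ as $m$ varies via Corollary~\ref{nb}.
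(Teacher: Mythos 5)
Your argument for $e\geq 5$ is correct and genuinely different from the paper's. You use the dimension formula of Proposition~\ref{irr} to separate the components by their index of speciality, extract $N_1(m)$, and recover the multiset of $c_t$'s from the first differences $N_1(m+1)-N_1(m)=\#\{k:\ c_{i_k}\geq m+2\}$; this is clean and arguably more transparent than what the paper does. The paper never separates components by speciality index: it works with the total count $N(m)$ alone, comparing it against a recursively corrected upper bound $\tilde N_j(m)$ (the count one would get if every not-yet-determined $c_t$ exceeded $m$), and reads off, at the first $m=m_j$ where $N(m)<\tilde N_j(m)$, both the value $m_j$ of the next-smallest $c_t$'s and their multiplicity $\alpha_j=\tilde N_j(m_j)-N(m_j)$. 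The advantage of the paper's route is that it treats all $e\geq 4$ uniformly and needs only the numbers $N(m)$, not the dimensions of the individual components (beyond $\dim S_1^0=e$).

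That uniformity is exactly what you are missing, because your write-up has a genuine gap at $e=4$: there $\dim C^{s,l}_{i,m}=2(m+1)$ is independent of $s$, so the dimensions give nothing beyond $e$, and your entire mechanism (isolating $N_1(m)$ and differencing it) is unavailable. You correctly flag this as the problem case and observe $h=N(2)-1$, but you then only assert that $\{c_2,c_3\}$ can be pinned down ``by a more careful analysis of $N(m)$.'' That analysis is precisely the nontrivial content of the case and is not carried out; as written, the corollary is not proved for $e=4$. The gap is fillable --- for instance the paper's recursive comparison of $N(m)$ with $\tilde N_j(m)$ works verbatim for $e=4$ --- but some such argument must actually be supplied. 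A minor additional point: you should state how $e$ itself is recovered (the paper uses $\dim S_1^0=e$; equivalently the minimal dimension $2(m+1)+(e-4)$ of a component of $S_m^0$), since your step of converting dimensions into speciality indices presupposes it.
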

\begin{dem}
 We have that $\dim(S_1^0)=e,$ the embedding dimension of $S.$ If $e=3,$ then for $m$ big enough, we have by theorem \ref{lc}
that $N(m)=c$ is constant, and we deduce that $S$ is an $A_c$ singularity. Suppose the $e\geq 4.$\\
For $m\geq 1,$ let $$\tilde{N}_1(m)
=\sum_{s=1}^{\lceil\frac{m}{2}\rceil}((m+1-(2s-1))+(e-3)(m+1-(2s-1)-1).$$
We have that $N(m)\leq\tilde{N}_1(m) $ and $N(1)=\tilde{N}_1(1)=1.$
Let $$m_1=min\{m~;~ N(m)<\tilde{N}_1(m) \}~~ \mbox{and}~~ \alpha_1=\tilde{N}_1(m_1)-N(m_1),$$
then there exists $i_1,\cdots,i_{\alpha_1}\in\{c_2,\ldots,c_{e-1}\}$ such that $c_{i_1}=\cdots=c_{i_{\alpha_1}}=m_1.$\\
If $\alpha_1=e-2,$ then we have found all the $c_i.$ If not, then for $j\geq 2,$ we recursively define 
$$\tilde{N}_{j}(m)=\sum_{s=1}^{\lceil\frac{m}{2}\rceil}( N^s_{c_{i_1}}(m)+ (N^s_{c_{i_2}}(m)-1)+\cdots+ (N^s_{c_{i_{\alpha_1}}}(m)-1)
+\cdots+$$
$$(N^s_{c_{i_{\alpha_1+\cdots+\alpha_{j-1}}}}(m)-1))+
(e-2-(\alpha_1+\cdots+\alpha_{j-1}))(m+1-(2s-1)-1),$$

$$m_j=min\{m~;~ N(m)<\tilde{N}_j(m) \}~~ \mbox{and}~~ \alpha_j=\tilde{N}_j(m_j)-N(m_j).$$
Therefore there exists 
$i_{\alpha_1+\cdots+\alpha_{j-1}+1},\cdots,i_{\alpha_1+\cdots+\alpha_{j-1}+\alpha_j}\in\{c_2,\ldots,c_{e-1}\}$ such that
 $$c_{i_{\alpha_1+\cdots+\alpha_{j-1}+1}}=\cdots=c_{i_{\alpha_1+\cdots+\alpha_{j-1}+\alpha_j}}=m_j.$$
If $\alpha_1+\cdots+\alpha_{j-1}+\alpha_j=e-2,$ then we have found all the $c_t,$ otherwise 
we repeat the procedure at most $e-2$ times.
\end{dem}

\begin{rem}
 Corollary $\ref{Ni}$ is to compare with the result of Nicaise in \cite{Ni}, where he proved that the motivic Igusa Poincaré series
of a toric surface is equivalent to the set $\{ c_t,t=2,\ldots,e-2\},$ and that the order of the $c_i$ in the continued fraction can not 
be extracted from this series. It is clear also from the formulas given in proposition \ref{irr} and corollary \ref{nb}, that
 the number of irreducible components and their dimensions is not affected by the order of the $c_i$ in the continued fraction.
 Note that despite that these informations on the jet schemes are closely related to the informations encoded in the motivic Igusa Poincaré series,
 they are not equivalent in general.
Below we show how  we extract all the $c_i$ or equivalently the analytical type of $S$ from their jet schemes.      
\end{rem}
\begin{cor}\label{gra}
 Let $S$ be a toric suface. The weighted graph that we have associated to the irreducible components of $S_m^0$ is equivalent to the data of
all the $c_i$ and of their order in the continued fraction, or equivalently to the analytical type of $S.$ 
\end{cor}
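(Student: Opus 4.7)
The goal is to recover the ordered sequence $(c_2,\ldots,c_{e-1})$ from $\Gamma$, since by Section $2.2$ this sequence (up to the reversal symmetry swapping the two generators of $\sigma$, which gives the same analytic germ) determines the analytical type of $S$. By Corollary \ref{Ni} the multiset $\{c_t\}$ is already determined by vertex counts and the dimension weights of $\Gamma$, so the additional information needed is only the \emph{order} of the $c_i$, and my plan is to read it off the identification structure $C_{i,m}^{s,s}=C_{i+1,m}^{s,m_{i+1}^s}$ encoded in the edges of $\Gamma$.

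The key observation is that for each level $m$ and each speciality $s\in\{1,\ldots,\lceil m/2\rceil\}$, Theorem \ref{th} together with Lemma \ref{id} arranges the vertices $v_{i,m}^{s,l}$ into a single path $P_{m,s}$ in $\Gamma$, whose $i$-th block (of cardinality $m_i^s-s+1$) comes from the subgraph $\Gamma_i$ and whose block-gluings are precisely the identifications of Lemma \ref{id}. For $m$ large enough (e.g.\ $m\geq c_is-1$ for every $i$) one has $m_i^s=(c_i-1)s$, so the $i$-th block has length $(c_i-2)s+1$; reading the sequence of block lengths along $P_{m,s}$ therefore yields $(c_2,\ldots,c_{e-1})$, well-defined up to the reversal obtained by traversing $P_{m,s}$ in the opposite direction.

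To make this procedure intrinsic to $\Gamma$, I would first isolate the paths $P_{m,s}$ themselves. Within a given level $m$ the speciality-$s$ vertices are recognizable from the dimension weight via the formula in Proposition \ref{irr}, and within a fixed speciality stratum the two extremal blocks (corresponding to $i=2$ and $i=e-1$) are characterized by Proposition \ref{gs}(2) as those whose $\pi_{m,2s-1}$-preimage is irreducible, which is visible in $\Gamma$ as blocks having only one projection edge down to the single vertex at level $2s-1$. These extremal blocks are the endpoints of $P_{m,s}$; tracing the path from one endpoint to the other, the gluing vertices (those identified between two consecutive $\Gamma_i$) appear as the distinguished vertices where the combinatorial type of the block boundary changes, and the block lengths between them recover the ordered $c_i$.

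The main obstacle will be exactly this intrinsic recognition step: one has to distinguish the gluing vertices from ordinary block-interior vertices and rule out additional unforeseen identifications in $\Gamma$ beyond those classified in Lemma \ref{id}. This reduces to a bookkeeping verification combining the dimension formula of Proposition \ref{irr}, the explicit cover of Proposition \ref{cov}, and the non-inclusion arguments from the proof of Theorem \ref{th}; once this is in place, the two admissible traversals of $P_{m,s}$ correspond to the reversal of the continued fraction, which is exactly the ambiguity inherent in the analytic type of $S$.
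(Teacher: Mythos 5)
Your overall strategy coincides with the paper's: Corollary \ref{Ni} recovers the multiset of the $c_t$, and the order is then to be read off the identifications $C_{i,m}^{s,s}=C_{i+1,m}^{s,m_{i+1}^s}$ of Lemma \ref{id} as they appear in $\Gamma$. The paper does this at level $m=2$, $s=1$, by observing that a component shared by $\Gamma_{i_1}$ and $\Gamma_{i_2}$ witnesses that $c_{i_1}$ and $c_{i_2}$ are neighbours in the continued fraction; the coloured broken lines of the figure carry exactly this adjacency data across all $m$.

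However, your concrete mechanism --- reading the sequence of block lengths $(c_i-2)s+1$ along the fixed-level path $P_{m,s}$ --- has a genuine gap whenever some interior $c_i$ equals $2$. In that case $m_i^s=s$, so the $i$-th block is the single vertex $C_{i,m}^{s,s}$, and by Lemma \ref{id} this vertex is simultaneously the last vertex of block $i-1$ and the first vertex of block $i+1$: the block is entirely absorbed into the gluings and is invisible in $P_{m,s}$. The path therefore records only the ordered subsequence of the $c_i$ different from $2$, not the positions of the $2$'s among them. This loss is not harmless: $[3,2,3]$ (i.e. $q/p=12/5$) and $[2,3,3]$ (i.e. $q/p=13/8$) define non-isomorphic surfaces, not related by reversal, with the same multiset and the same ordered non-$2$ subsequence, and one checks that at every level $m$ and every speciality $s$ their strata are paths with the same number of vertices and the same visible block lengths (and the same weights, by Proposition \ref{irr}). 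So the "intrinsic recognition step" that you defer to bookkeeping is precisely where the argument must do real work: to separate such cases one must exploit more than the fixed-level paths, namely the membership of a vertex in the various $\Gamma_i$ (equivalently, for how many and which consecutive indices $i$ the given component arises as some $C_{i,m}^{s,l}$), which is the adjacency information the paper extracts at $m=2$ and encodes in the identified infinite lines of $\Gamma_i$ and $\Gamma_{i+1}$. A secondary inaccuracy: your characterization of the extremal blocks invokes Proposition \ref{gs}(2), which concerns $i=1,e$ rather than $i=2,e-1$; the link between the two is made in the proof of Proposition \ref{cov} and would need to be spelled out.
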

\begin{rem}
Note that if we reverse  the order of the $c_t,$ the obtained toric surface will be isomorphic to the original one.
\end{rem}
\begin{dem}
 By corollary \ref{Ni}, We just need to show that we can extract the order of the $c_t$. Given an irreducible component $C$ of 
$S_2^{0},$ then if there exists a unic $i$ such that $C=C^{1,l}_{i,2}$ then $c_i$ is extremal in the continued fraction. If not, 
let $i_1$ and $i_2$ be such that $C=C^{1,l}_{i_1,2}=C^{1,l}_{i_2,2},$ then $c_{i_1}$ and $c_{i_2}$ are neihbours in the continued fraction, 
and the corollary follows. On the graph this can be seen on the broken lines that we indentify.
   
\end{dem}

Using a theorem of Mustata in \cite{Mus2}, we obtain as a byproduct the log canonical threshold $lct(S,\mathbb{A}^e)$ of the pair $S \subset \mathbb{A}^e:$
\begin{cor}
 Let $S$ be a toric surface of embedding dimension $e.$ If $e=3$ (i.e. $S$ is an $A_n$ singularity) then  $lct(S,\mathbb{A}^e)=1,$ otherwise
$$lct(S,\mathbb{A}^e)=\frac{e}{2} $$
\end{cor}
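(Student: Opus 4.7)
\begin{dem}[Proof sketch]
The plan is to apply Mustata's formula from \cite{Mus2}, which states that for a proper closed subscheme $Y$ of a smooth variety $X$ the log canonical threshold is
$$\textnormal{lct}(X,Y)=\inf_{m\geq 0}\frac{\textnormal{codim}(Y_m,X_m)}{m+1},$$
so in our setting it suffices to compute $\textnormal{codim}(S_m,\mathbb{A}^e_m)/(m+1)$ for every $m$, and minimize. Since $\dim S_m$ is the maximum of the dimensions of the irreducible components of $S_m$, we have to take the minimum codimension among these components.

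For the case $e=3$, Theorem \ref{lc} together with the observation, made just before it, that $\overline{\pi_m^{-1}(S\setminus 0)}$ has codimension $m+1$ in $\mathbb{A}^3_m$, gives $\textnormal{codim}(S_m,\mathbb{A}^3_m)=m+1$. Hence the ratio is constantly equal to $1$ and $\textnormal{lct}(S,\mathbb{A}^3)=1$.

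For $e\geq 4$, Theorem \ref{th} identifies the irreducible components of $S_m$ as $\overline{\pi_m^{-1}(S\setminus 0)}$, which has codimension $(m+1)(e-2)$, together with the $C_{i,m}^{s,l}$. By Proposition \ref{irr} the latter have codimension
$$se+(m-(2s-1))(e-2)=(m+1)(e-2)+s(4-e)$$
in $\mathbb{A}^e_m$. Dividing by $m+1$, the ratio becomes
$$(e-2)+\frac{s(4-e)}{m+1}.$$
When $e=4$ the second term vanishes, so every ratio equals $e/2=2$. When $e\geq 5$ the coefficient $(4-e)$ is negative, so the infimum is attained by making $s/(m+1)$ as large as possible. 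Since $s\leq\lceil m/2\rceil$, we have $s/(m+1)\leq 1/2$, with equality exactly when $m=2s-1$ is odd (for instance $m=1$, $s=1$, which is allowed for all $i$ since $m_i^1\geq 1$). This yields the minimum value
$$(e-2)+\frac{4-e}{2}=\frac{e}{2},$$
and one checks that this is strictly smaller than the ratio $e-2$ coming from $\overline{\pi_m^{-1}(S\setminus 0)}$ (indeed $e/2<e-2$ iff $e>4$). Therefore $\textnormal{lct}(S,\mathbb{A}^e)=e/2$ in this range.

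The only potential subtlety is verifying that for every $m\geq 1$ the components of $S_m$ are indeed exhausted by Theorem \ref{th} together with $\overline{\pi_m^{-1}(S\setminus 0)}$; this is guaranteed by Proposition \ref{cov} combined with the codimension comparisons in Proposition \ref{irr}, which were already used to establish Theorem \ref{th}. Beyond this, the argument is just the arithmetic manipulation above.
\end{dem}
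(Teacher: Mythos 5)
Your proposal is correct and follows the same route as the paper: apply Mustata's formula, read off the codimensions of the components $C_{i,m}^{s,l}$ from Proposition \ref{irr} (and of $\overline{\pi_m^{-1}(S\setminus 0)}$), and minimize the ratio, with the minimum $e/2$ attained on the odd levels $m=2s-1$ at the components of maximal index of speciality. The only cosmetic difference is that you carry out the minimization uniformly by rewriting the codimension as $(m+1)(e-2)+s(4-e)$ and bounding $s/(m+1)\leq 1/2$, where the paper splits into the cases $m$ odd and $m$ even; the content is identical.
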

\begin{dem}
 By \cite{Mus2} we have that $$lct(S,\mathbb{A}^e)=\min_{m \in \mathbb{N}}\frac{Codim(S_m,\mathbb{A}^e_m)}{m+1}.$$
The case $e=3$ follows from section 3, since in this case we have that $S_m$ is irreducible of codimension $m+1.$
Let us suppose that $e\geq 4.$ If $m$ is odd, $m=2s-1,~s\geq 1$ then the  component $C_{i,2s-1}^{s,s}$ is of maximal dimension and we have that  
$$\frac{Codim(C_{i,2s-1}^{s,s},\mathbb{A}^e_{2s-1})}{2s}=\frac{se}{2s}=\frac{e}{2}.$$ 
If $m$ is even, $m=2n,~n\geq0$ then the components $C_{i,2n}^{n,l},~i=2,\ldots,e-1,~l=n,m_i^n$  are of maximal dimension, and since $e\geq 4$ we have that 
 $$\frac{Codim(C_{i,2n}^{n,l},\mathbb{A}^e_{2n})}{2n+1}=\frac{ne+e-2}{2n+1}\geq \frac{e}{2},$$
and the lemma follows.

\end{dem}

\begin{cor}\label{Na} For $m\geq \mbox{max}\{c_i,~i=2,\cdots,e-1\},$ the number of irreducible components of $S_m^0,$ with index of speciality 
 $s=1,$ is equal to the number of exceptional divisors that appear on the minimal resolution of $S.$
\end{cor}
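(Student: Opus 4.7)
The plan is to combine Theorem \ref{th}, Lemma \ref{id} and Proposition \ref{oda}: the first enumerates the components with $s=1$, the second gives the identifications among them, and the third matches the resulting count with the number of exceptional divisors.

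First, I would specialise the classification of Theorem \ref{th} to $s=1$. For each $i\in\{2,\dots,e-1\}$, the relevant values of $l$ are $\{1,\dots,m_{i}^{1}\}$ with $m_{i}^{1}=\min\{c_{i}-1,\,m\}$. Under the hypothesis $m\geq \max_{i}c_{i}$, this reduces to $m_{i}^{1}=c_{i}-1$ for every $i$. So, before applying identifications, the components with index of speciality $1$ are parametrised by pairs $(i,l)$ with $i\in\{2,\dots,e-1\}$ and $l\in\{1,\dots,c_{i}-1\}$, giving a total of $\sum_{i=2}^{e-1}(c_{i}-1)$ candidates.

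Next, I would apply the identifications from Lemma \ref{id}, which for $s=1$ read $C_{i,m}^{1,1}=C_{i+1,m}^{1,\,c_{i+1}-1}$ for $i=2,\dots,e-2$. By the non\-inclusion arguments inside the proof of Theorem \ref{th}, these are the only identifications among the $C_{i,m}^{1,l}$. Viewing the pairs $(i,l)$ as vertices of a graph and each identification as one edge, one observes that every vertex has at most two incident edges (one linking to level $i-1$, one to level $i+1$), and edges only go between consecutive levels, so the graph is a disjoint union of paths, in particular a forest. Hence the number of connected components, i.e., of distinct $C_{i,m}^{1,l}$, equals vertices minus edges:
$$\sum_{i=2}^{e-1}(c_{i}-1)\;-\;(e-3)\;=\;\sum_{i=2}^{e-1} c_{i}\;-\;2(e-2)+1.$$

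Finally, I would invoke Proposition \ref{oda}, which states precisely that the right\-hand side equals the number of irreducible components of the exceptional divisor on the minimal resolution of $S$. This completes the count and yields the corollary.

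The only non\-routine step is checking that the identifications of Lemma \ref{id} are the only ones and that they create no cycles; the first follows from the distinctness statement in Theorem \ref{th}, and the second is immediate from the level\-by\-level structure of the identification. Everything else is bookkeeping with the indices introduced in Section~4.
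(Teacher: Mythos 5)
Your proposal is correct and follows essentially the same route as the paper: the paper's one-line proof compares Corollary \ref{nb} (which is itself obtained by counting the components of Theorem \ref{th} modulo the identifications of Lemma \ref{id}) with Proposition \ref{oda}, and your argument simply carries out that count explicitly for $s=1$, arriving at $\sum_{i=2}^{e-1}(c_i-1)-(e-3)=c_2+\cdots+c_{e-1}-2(e-2)+1$ as required.
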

\begin{proof}
This comes from the comparaison of corollary \ref{nb} with proposition \ref{oda}.
 \end{proof}
\begin{rem}
 The corollary \ref{Na} is to compare with the bijectivity of the Nash map, due to Ishii and Kollar for this type of Singularities, \cite{IK}.
\end{rem}

\thanks{Laboratoire de Math\'ematiques de Versailles,
 Universit\'e de Versailles-St-Quentin-en-Yvelines, 45 avenue des \'Etats-Unis,
78035 Versailles Cedex, France.}\\
\thanks{ Email address: mourtada@math.uvsq.fr}
 
\end{document}